\DeclareSymbolFont{cyrletters}{OT2}{wncyr}{m}{n}
\newcommand{\Sha}{\text{{\brus
SH}}}
\newtheorem{theorem}{Theorem}
\newtheorem{lemma}[theorem]{Lemma}
\newtheorem{proposition}[theorem]{Proposition}
\newtheorem{corollary}[theorem]{Corollary}
\newcommand{\Gal}{\mbox{Gal}}
\newcommand\bQ{\mathbb Q}
\newcommand\bZ{\mathbb Z}
\newcommand\cL{\mathcal L}
\newcommand\cD{\mathcal D}
\newcommand\cP{\mathcal P}
\newcommand\cR{\mathcal R}
\newcommand\cS{\mathcal S}
\newcommand\cT{\mathcal T}
\font\brus=wncyr10.240pk scaled 1200 .240pk
\begin{document}
\title{On The Multinorm Principle For Finite Extensions}
\author[Pollio]{Timothy P. Pollio}

\begin{abstract} Let $L_1$ and $L_2$ be finite abelian extensions of a global field $K$.  We compute the obstruction to the multinorm principle for the pair $L_1,L_2$.  
\end{abstract}

\maketitle

\section{Introduction}
Let $K$ be a global field.  Given a finite extension $L/K$, let $J_L$ denote the idele group of $L$, let $N_{L/K}\colon J_L \rightarrow J_K$ denote the natural extension of the norm map associated with $L/K$, and let $\Sha(L/K)$ denote the Tate-Shafarevich group of $L/K$ (cf. \cite{PoR}).  When $K$ does not vary, we will write $N(L^\times)$ and $N(J_L)$ respectively in place of $N_{L/K}(L^\times)$ and $N_{L/K}(J_L)$.  As in \cite{PoR}, we say that a pair of finite extensions $L_1,L_2$ of $K$ satisfies the \textit{multinorm principle} if 
\[ K^\times \cap N(J_{L_1})N(J_{L_2})=N(L_1^\times)N(L_2^\times).\]

The obstruction to the multinorm principle is given by the quotient
\[\Sha(L_1,L_2/K):= \frac{K^\times \cap N(J_{L_1})N(J_{L_2})}{N(L_1^\times)N(L_2^\times)}.  \]
The multinorm principle has a variety of applications (cf. \textit{loc. cit.}, $\S1$), but it is not fully understood.  The main theorem of \cite{PoR} says that $\Sha(L_1,L_2/K)=\{1\}$ whenever $L_1,L_2$ is a pair of finite separable extensions of $K$ with linearly disjoint Galois closures.  However, little is known about $\Sha(L_1,L_2/K)$ for more general pairs of extensions.  

In this paper, we describe a general approach to the multinorm principle that builds on the techniques used in \cite{PoR}.  The idea is that we should try to describe $\Sha(L_1,L_2/K)$ by studying the map
\[f \colon \Sha(L_1/K) \times \Sha(L_2/K) \rightarrow \Sha(L_1,L_2/K) \]
defined by
\[\left(x  N(L_1^\times) ,yN(L_2^\times)    \right) \mapsto xy^{-1}N(L_1^\times)N(L_2^\times).\]
\vskip 3mm
Since $\Sha(L_1,L_2/K)$ is determined up to extension by $\mathrm{Im}\mbox{ }f$ and $\mathrm{Coker}\mbox{ }f$, it suffices to describe     
\[ \Sha_1(L_1,L_2/K):= \mathrm{Im}(f) \mbox{ }\mbox{ }\mbox{ }\mbox{ and} \mbox{ }\mbox{ }\mbox{ }\Sha_2(L_1,L_2/K):= \mathrm{Coker}(f)\]
individually, and we refer to these groups as the \textit{first and second obstructions} to the multinorm principle.  We will analyze $\Sha_1(L_1,L_2/K)$ and $\Sha_2(L_1,L_2/K)$ using group cohomology and class field theory respectively, and prove estimates which will allow us to compute both obstructions in some important special cases.  This approach can be used to recover the main theorem of \cite{PoR} (see \S 5), and it enables us to prove the main theorem of this note, which characterizes the multinorm principle for pairs of finite abelian extensions.        \begin{theorem}Let $L_1,L_2$ be a pair of finite abelian extensions of $K$.  Then
\[\Sha(L_1,L_2/K) \simeq \Sha(L_1 \cap L_2/K) .\]
In particular, $L_1 , L_2$ satisfies the multinorm principle iff $L_1 \cap L_2$ satisfies the norm principle.   
\end{theorem}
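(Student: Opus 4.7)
The plan is to reduce the multinorm principle for $(L_1, L_2)$ over $K$ to the Hasse norm principle for the single field $E := L_1 \cap L_2$ by first passing to $E$. Over $E$, the abelian extensions $L_1/E$ and $L_2/E$ satisfy $L_1 \cap L_2 = E$, so they are linearly disjoint with linearly disjoint (abelian) Galois closures. Applying the main theorem of \cite{PoR} over $E$ yields
\[E^\times \cap N_{L_1/E}(J_{L_1})N_{L_2/E}(J_{L_2}) = N_{L_1/E}(L_1^\times)N_{L_2/E}(L_2^\times),\]
so the multinorm principle for $(L_1, L_2)$ already holds over $E$. The remaining task is to descend this back to $K$.

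The descent rests on two class-field-theoretic identities for abelian $L_1, L_2/K$, both coming from the Galois correspondence applied to $L_1L_2/K$:
\[K^\times \cdot N(J_{L_1})N(J_{L_2}) = K^\times \cdot N(J_E), \qquad N(J_{L_1}) \cap N(J_{L_2}) = N(J_{L_1L_2}).\]
Combined with the transitivity $N_{L_i/K} = N_{E/K} \circ N_{L_i/E}$, these allow me to construct and study the natural homomorphism
\[\psi \colon \Sha(L_1,L_2/K) \longrightarrow \Sha(E/K), \qquad [z] \mapsto [z],\]
which is well-defined because $N(J_{L_1})N(J_{L_2}) \subseteq N(J_E)$ and $N(L_1^\times)N(L_2^\times) \subseteq N(E^\times)$. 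I aim to show $\psi$ is an isomorphism using the two-obstruction framework of the introduction: the class-field-theoretic identities control $\Sha_2$, giving surjectivity of $\psi$, while the group-cohomological analysis of $\Sha_1$, together with the $E$-level result above, gives injectivity.

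The hard part is injectivity, i.e., $K^\times \cap N(J_{L_1})N(J_{L_2}) \cap N(E^\times) \subseteq N(L_1^\times)N(L_2^\times)$. Given $z = N_{E/K}(e)$ with $e \in E^\times$ and $z \in N(J_{L_1})N(J_{L_2})$, the strategy is to adjust $e$ by an element of $E^\times$ of trivial $K$-norm so that the adjusted $e'$ lies in $E^\times \cap N_{L_1/E}(J_{L_1})N_{L_2/E}(J_{L_2})$; the $E$-level result from \cite{PoR} then factors $e' = N_{L_1/E}(\ell_1) N_{L_2/E}(\ell_2)$, and applying $N_{E/K}$ yields the desired decomposition of $z$. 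The existence of such an adjustment is a Hilbert-90-flavored argument that should follow from the estimates on $\Sha_1$ described in the introduction. Once $\psi$ is an isomorphism, the final assertion is immediate: the multinorm principle for $(L_1, L_2)$ is the vanishing of $\Sha(L_1,L_2/K)$, which is then equivalent to the vanishing of $\Sha(E/K)$, i.e., to the Hasse norm principle for $E$.
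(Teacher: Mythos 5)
Your overall plan is reasonable and your map $\psi$ is in fact the isomorphism the paper ultimately produces (it is the map through which $j\colon \Sha(L/K)\to\Sha(E/K)$ factors in the paper's proof), and the observation that the main theorem of \cite{PoR} applies to $L_1,L_2$ viewed over $E$ is correct. But there is a genuine gap at exactly the step you label the hard part. The ``adjustment'' of $e$ by a norm-one element of $E^\times$ is not a Hilbert-90-flavored argument: for a non-cyclic decomposition group $G_E^v$ one has $\hat{H}^{-1}(G_E^v,E_w^\times)\simeq H_2(G_E^v,\bZ)\simeq G_E^v\wedge G_E^v\neq 0$, so $\mathrm{Ker}(N_{E/K}\colon J_E\to J_K)$ is not exhausted by elements of the form $\prod_\sigma \sigma(x)x^{-1}$, and knowing $e\equiv u \pmod{\mathrm{Ker}(N_{E/K}|_{J_E})}$ with $u\in N_{L_1/E}(J_{L_1})N_{L_2/E}(J_{L_2})$ gives no control over whether $e$ can be moved into that subgroup by a \emph{global} norm-one element. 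Measuring precisely this obstruction is the entire content of the paper's first-obstruction analysis: the identification of $g$ with a map between cokernels of maps of exterior squares of decomposition groups and the computation $\mathrm{Coker}(\cT_0)\simeq G_E\wedge G_E$ (Lemmas \ref{LastBlow}--\ref{2ndLastBlow}). The ``estimates on $\Sha_1$ described in the introduction'' to which you appeal \emph{are} that computation; you have neither carried it out nor reduced it to anything simpler, so the injectivity of $\psi$ is asserted rather than proved.

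The surjectivity direction is also under-justified. The identity $K^\times N(J_{L_1})N(J_{L_2})=K^\times N(J_E)$ only yields $z=kn$ with $k\in K^\times$ and $n\in K^\times\cap N(J_{L_1})N(J_{L_2})$; it does not place $k$ in $N(E^\times)$. What is actually needed is $K^\times\cap N(J_E)=(K^\times\cap N(J_L))N(E^\times)$, which the paper obtains from the injectivity of $\varphi$ for the pair $E\subset L$ (Lemma \ref{L:3B} and Corollary \ref{ShaSurjCor}, resting on Lemma 3 of \cite{PoR}); this part is repairable by citing those results. You may also find it easier to follow the paper's logic of avoiding a direct injectivity argument altogether: it shows $\Sha_2(L_1,L_2/K)=\{1\}$, that $\Sha_1(L_1,L_2/K)$ is a quotient of $\mathrm{Coker}(g)\simeq\Sha(E/K)$, and that $\Sha(E/K)$ is a quotient of $\Sha(L_1,L_2/K)$, concluding by finiteness--- but the cokernel computation cannot be bypassed.
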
     

We begin in $\S2$ and $\S3$ by giving descriptions of the first and second obstructions respectively.  Then we apply these descriptions in the special case where $L_1, L_2$ is a pair of abelian extensions of $K$ to prove Theorem 1 in $\S4$.  Finally, $\S5$ contains several examples related to our analysis of the multinorm principle.

\vskip 3mm
\noindent \textbf{Remark.} Earlier versions of this paper contained additional sufficient conditions for the multinorm principle to hold for $n$-tuples of extensions (see https://sites.google.com/site/timothypollio/papers).  By generalizing the proof of Proposition $15$ in \cite{PoR}, we were able to show that the multinorm principle holds for $n$-tuples of finite separable extensions whose Galois closures are linearly disjoint as a family and for pairs of Galois extensions with cyclic intersection.  Meanwhile, Demarche and Wei obtained similar results which they describe in \cite{DWei}.  Their argument is similar to ours and their results are slightly stronger, so we decided to omit these results from the final version of this paper.

\section{The First Obstruction}
Our analysis of the first obstruction begins with the following observation.
\begin{lemma}\label{FirstRed} Let $L_1, L_2$ be a pair of finite extensions of $K$, let $L=L_1L_2$, and let   
\[ g \colon \Sha(L/K) \rightarrow \Sha(L_1/K) \times \Sha(L_2/K)  \]     
be the map defined by
\[x  N(L^\times)    \mapsto \left(xN(L_1^\times),xN(L_2^\times)   \right).\]
Then $\Sha_1(L_1,L_2/K)$ is isomorphic to a quotient of $\mathrm{Coker}(g)$.
\end{lemma}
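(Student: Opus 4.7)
My plan is to show that the composition $f \circ g$ is trivial, which by the universal property of the cokernel produces a surjection $\mathrm{Coker}(g) \twoheadrightarrow \mathrm{Im}(f) = \Sha_1(L_1,L_2/K)$ and thus realizes $\Sha_1(L_1,L_2/K)$ as a quotient of $\mathrm{Coker}(g)$.

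The first step is to verify that the maps $g$ and $f$ are well-defined. For $g$, transitivity of the norm gives $N_{L/K} = N_{L_i/K}\circ N_{L/L_i}$, so $N(J_L) \subseteq N(J_{L_i})$ and $N(L^\times) \subseteq N(L_i^\times)$ for $i=1,2$; hence $x \in K^\times \cap N(J_L)$ indeed represents a class in each $\Sha(L_i/K)$, and changing representatives within $N(L^\times)$ leaves these classes unchanged. For $f$, if $x \in K^\times \cap N(J_{L_1})$ and $y \in K^\times \cap N(J_{L_2})$, then $xy^{-1} \in K^\times \cap N(J_{L_1})N(J_{L_2})$, and any alteration of $x$ or $y$ by an element of $N(L_i^\times)$ is absorbed into $N(L_1^\times)N(L_2^\times)$.

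Next, a direct computation yields
\[(f \circ g)\bigl(x N(L^\times)\bigr) = f\bigl(x N(L_1^\times),\, x N(L_2^\times)\bigr) = x x^{-1} N(L_1^\times)N(L_2^\times) = 1,\]
so $\mathrm{Im}(g) \subseteq \ker(f)$. Therefore $f$ descends to a homomorphism $\bar f\colon \mathrm{Coker}(g) \to \Sha(L_1,L_2/K)$ with $\mathrm{Im}(\bar f) = \mathrm{Im}(f) = \Sha_1(L_1,L_2/K)$, giving
\[\Sha_1(L_1,L_2/K) \simeq \mathrm{Coker}(g)/\ker(\bar f),\]
which is the asserted quotient.

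I do not anticipate a real obstacle: the content is entirely formal, and the only care required is the coset bookkeeping that reduces to norm transitivity together with the closure of $N(L_1^\times)N(L_2^\times)$ under the relevant translations. The phrasing ``a quotient of $\mathrm{Coker}(g)$'' rather than ``isomorphic to $\mathrm{Coker}(g)$'' already signals that $\ker(\bar f)$ need not vanish in general, and presumably identifying that kernel is what the remainder of $\S2$ will take up.
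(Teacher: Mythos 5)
Your proof is correct and follows essentially the same route as the paper: the paper simply notes that $\mathrm{Im}(g) \subset \mathrm{Ker}(f)$ and invokes the first isomorphism theorem, which is exactly the factorization through $\mathrm{Coker}(g)$ that you spell out. Your additional verification of well-definedness and the explicit computation $f\circ g = 1$ are just the details the paper leaves implicit.
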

\begin{proof} It is clear that $\mathrm{Im}(g) \subset \mathrm{Ker}(f)$, so the claim follows from the first isomorphism theorem.    
\end{proof}

In this section, we give several results which can be used to compute $\mathrm{Coker}(g)$ when $L_1$ and $L_2$ are Galois extensions of $K$.  More generally, if $F$ and $L$ are Galois extensions of $K$ with $L \subset F$, we will describe the map   
\[h\colon  \Sha(F/K)= \frac{K^\times \cap N(J_F)}{N(F^\times)} \rightarrow \frac{K^\times \cap N(J_L)}{N(L^\times)} = \Sha(L/K) \]
defined by 
\[xN(F^\times) \mapsto xN(L^\times). \]
Given a finite group $G$ and a $G$-module $A$, we let $\hat{H}^i(G,A)$ denote the $i$th Tate cohomology group of $A$, and let $\mathrm{Cor}$, $\mathrm{Def}$, and $\mathrm{Rsd}$ denote the corestriction, deflation(cf. \cite{Weiss}), and residuation (cf. \cite{Horie}) maps.  We consider $\bZ$ as a $G$-module with trivial action.  Let $H_i(G,A)$ denote the $i$th homology group of $A$.  If $\varphi \colon G \rightarrow H$ is a group homomorphism, then $\varphi$ induces a map of standard complexes which induces a map of homology groups
\[\varphi_* \colon H_i(G,\bZ) \rightarrow H_i(H,\bZ) \] 
(cf. \cite[p.\,99]{Cass}).  When $A=\bZ$, the corestriction and residuation maps can both be interpreted as induced maps in this sense.
\begin{proposition}\label{generalind} Let $i \geq 1$ and identify $H_i(-,\bZ)$ with $\hat{H}^{-i-1}(-,\bZ).$  If $H \leq G$ and $\iota \colon H \rightarrow G$ is the canonical inclusion map, then 
\[\mathrm{Cor}^G_H \colon H_i(H,\bZ) \rightarrow H_i(G,\bZ) \]
is equal to $\iota_*$.  Furthermore, if $H \triangleleft G$ and $\pi \colon G \rightarrow G/H$ is the canoncial projection map, then 
\[\mathrm{Rsd}^G_{G/H}\colon H_i(G,\bZ) \rightarrow H_i(G/H,\bZ) \]
is equal to $\pi_*$.
\end{proposition}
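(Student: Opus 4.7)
My plan is to realize both sides of each asserted equality as the induced map of a concrete chain map on a common free resolution, and then observe that they coincide. I would first fix the standard bar resolution $B_*(G)$ of $\bZ$ over $\bZ G$. Since $\bZ G$ is free as a $\bZ H$-module, $B_*(G)$ is also a free $\bZ H$-resolution of $\bZ$, so that both $H_i(H,\bZ)$ and $H_i(G,\bZ)$ may be computed from it: $H_i(H,\bZ) = H_i(\bZ \otimes_{\bZ H} B_*(G))$ and $H_i(G,\bZ) = H_i(\bZ \otimes_{\bZ G} B_*(G))$.

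For the first assertion, the map $\iota_*$ is, essentially by definition, induced by the canonical surjection $\bZ \otimes_{\bZ H} B_*(G) \twoheadrightarrow \bZ \otimes_{\bZ G} B_*(G)$. I would then show that this same chain map realizes the Tate corestriction $\mathrm{Cor}^G_H$ in negative degrees. Since corestriction is defined uniformly on $\hat{H}^j(-,A)$ via dimension shifting from a base degree, the cleanest route is to verify the equality explicitly in the base case $\hat{H}^{-2}(-,\bZ) = H_1(-,\bZ) = (-)^{\mathrm{ab}}$, where the corestriction is classically known to coincide with the abelianization of $\iota$, and then propagate to all $i \geq 1$ by the dimension-shifting axioms. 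For the second assertion the argument is parallel: the bar resolution $B_*(G/H)$ is obtained from $B_*(G)$ by applying $\pi$ term-by-term, the resulting chain map $B_*(G) \to B_*(G/H)$ induces $\pi_*$ on homology, and by construction \cite{Horie} the residuation map extends deflation to all negative Tate degrees via precisely this chain map, so $\mathrm{Rsd}^G_{G/H} = \pi_*$ follows.

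The main obstacle I anticipate is the bookkeeping required to align the uniform Tate-cohomological definitions of $\mathrm{Cor}$ and $\mathrm{Rsd}$---which involve dimension-shifting isomorphisms and careful sign conventions---with the purely homological constructions $\iota_*$ and $\pi_*$ under the identification $H_i(-,\bZ) \simeq \hat{H}^{-i-1}(-,\bZ)$. Once the base case is pinned down at $i=1$, the higher-degree assertions follow formally, so the real work is localized in a single, low-dimensional computation.
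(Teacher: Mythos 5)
Your proposal is correct and amounts to the same argument as the paper, which simply observes that the claim follows from the definitions of corestriction and residuation in negative degrees given in the cited sources. The extra structure you supply --- realizing both maps on a common resolution, pinning down the base case $\hat{H}^{-2}(-,\bZ)=(-)^{\mathrm{ab}}$, and propagating by dimension shifting --- is just the standard way of making that one-line citation explicit, and it goes through without difficulty.
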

\begin{proof} This follows from the definitions of the corestriction and residuation maps given in \cite[p.\,99]{Cass} and \cite{Horie}.
\end{proof}
Let $G= \Gal(F/K)$ and $H=\Gal(L/K)$.  For each valuation $v$ of $K$, let $H^v$ and $G^v$ be the decomposition groups of compatible fixed extensions of $v$ to $L$ and $F$ respectively.  Let $\iota^v_G\colon G^v \rightarrow G$ and $\iota^v_H\colon H^v \rightarrow H$ denote the canoncial inclusion maps, and let $\pi\colon G \rightarrow H$ and $\pi^v\colon G^v \rightarrow H^v$ denote the canonical projection maps.  We note that $\iota^v_H \circ \pi^v = \pi \circ \iota^v_G$, and that the construction of induced maps is functorial so the following is a consequence of Proposition \ref{generalind}.
\begin{corollary}\label{compat2} For $i \geq 1$ and $A=\bZ$, 
\[ \mathrm{Cor}_{G^v}^G = (\iota_G^v)_*,  \mbox{ }\mbox{ }\mbox{ } \mathrm{Cor}_{H^v}^H = (\iota_H^v)_*, \mbox{ }\mbox{ }\mbox{ } \mathrm{Rsd}^G_{H} = \pi_*\mbox{ }\mbox{ }\mbox{ }\mbox{ and }\mbox{ }\mbox{ }\mbox{ }\mbox{ } \mathrm{Rsd}^{G^v}_{H^v} = \pi^v_*. \]
Moreover, the diagram
\[ 
\begin{CD}
H_2(G^v,\bZ) @>\mathrm{Cor}^G_{G^v}>> H_2(G,\bZ) \\
@V\mathrm{Rsd}^{G^v}_{H^v}VV @V\mathrm{Rsd}^G_{H}VV \\
H_2(H^v,\bZ) @>\mathrm{Cor}^{H}_{H^v}>> H_2(H,\bZ)
\end{CD}
\]
commutes.  
\end{corollary}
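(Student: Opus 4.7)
The plan is to deduce the corollary as a formal consequence of Proposition \ref{generalind}, combined with functoriality of the induced map on homology and the compatibility $\iota^v_H \circ \pi^v = \pi \circ \iota^v_G$ recorded just before the statement.

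First I would handle the four identifications by direct application of Proposition \ref{generalind}. For the two corestriction identities I would invoke the first half of the proposition with the subgroups $G^v \leq G$ and $H^v \leq H$, which immediately gives $\mathrm{Cor}^G_{G^v} = (\iota^v_G)_*$ and $\mathrm{Cor}^H_{H^v} = (\iota^v_H)_*$. For the two residuation identities I would invoke the second half. Here one must first observe that $\pi \colon G \to H$ is the quotient of $G$ by $\Gal(F/L)$, which is normal in $G$ because $L/K$ is Galois; and that the induced map $\pi^v \colon G^v \to H^v$ is the quotient of $G^v$ by $G^v \cap \Gal(F/L)$, under the canonical identification of $H^v$ with a decomposition group in $\Gal(L/K)$. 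With these identifications in hand, Proposition \ref{generalind} delivers $\mathrm{Rsd}^G_H = \pi_*$ and $\mathrm{Rsd}^{G^v}_{H^v} = \pi^v_*$.

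For the commutativity of the square I would substitute these four identifications so that every edge of the diagram becomes an induced map on group homology. The claim then collapses to
\[ \pi_* \circ (\iota^v_G)_* = (\iota^v_H)_* \circ (\pi^v)_*,\]
which by functoriality of the construction $\varphi \mapsto \varphi_*$ is equivalent to the group-theoretic identity $\pi \circ \iota^v_G = \iota^v_H \circ \pi^v$ noted in the text. The only real obstacle here is a mild one of setup rather than of proof: one must ensure that the fixed extensions of $v$ to $L$ and to $F$ are chosen compatibly (so that the restriction of the latter equals the former), which is precisely what guarantees that $\pi$ carries $G^v$ into $H^v$ and that the square of group homomorphisms commutes on the nose. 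Once this compatibility of choices is fixed, the rest of the argument is formal.
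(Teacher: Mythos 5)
Your proposal is correct and follows exactly the route the paper intends: the four identifications come from Proposition \ref{generalind}, and the commutativity of the square reduces via functoriality of $\varphi \mapsto \varphi_*$ to the identity $\iota^v_H \circ \pi^v = \pi \circ \iota^v_G$, which is precisely what the paper notes just before stating the corollary. Your added care about $\pi^v$ being a genuine quotient map and about compatible choices of extensions of $v$ is a reasonable elaboration of details the paper leaves implicit.
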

\begin{lemma}\label{ShaLem}

The diagram
\begin{equation}\label{H2Diag1}
\begin{CD}
\bigoplus_v H_2(G^v,\bZ) @> \sum_v \mathrm{Cor}_{G^v}^G>> H_2(G,\bZ) \\
@V \left( \mathrm{Rsd}_{H^v}^{G^v} \right) VV @V\mathrm{Rsd}^G_{H}VV \\
\bigoplus_v H_2(H^v,\bZ) @> \sum_v \mathrm{Cor}_{H^v}^H>> H_2(H,\bZ)
\end{CD}\end{equation}
commutes.  Let
\[\gamma_F= \sum_v \mathrm{Cor}_{G^v}^G \mbox{ }\mbox{ }\mbox{ }\mbox{ and  }\mbox{ }\mbox{ }\mbox{ }
\gamma_L= \sum_v \mathrm{Cor}_{H^v}^H.\]
 Then $h$ can be identified with the map
\[\cR_{F/L}\colon \mathrm{Coker}(\gamma_F)\rightarrow \mathrm{Coker}(\gamma_L) \]  
induced by $\mathrm{Rsd}^G_{H}$.
\end{lemma}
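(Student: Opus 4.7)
The lemma splits into two assertions: the commutativity of (\ref{H2Diag1}) and the identification of $h$ with $\cR_{F/L}$. The commutativity is essentially formal. For each valuation $v$, Corollary \ref{compat2} gives a commutative square relating the maps $(\iota^v_G)_*, (\iota^v_H)_*, \pi_*, \pi^v_*$; summing these squares over $v$ and invoking the universal property of the direct sum yields the outer diagram.

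For the identification of $h$ with $\cR_{F/L}$, my plan is to use the Tate--Nakayama mechanism to rewrite $\Sha(F/K)$ cohomologically. The short exact sequence of $G$-modules $1 \to F^\times \to J_F \to C_F \to 1$ (where $C_F$ is the idele class group) yields a Tate cohomology long exact sequence in which $\Sha(F/K) = \ker(\hat{H}^0(G,F^\times)\to \hat{H}^0(G,J_F))$ appears as the image of $\hat{H}^{-1}(G,C_F) \to \hat{H}^0(G,F^\times)$, hence as the cokernel of the map $\hat{H}^{-1}(G,J_F) \to \hat{H}^{-1}(G,C_F)$. Shapiro's lemma identifies $\hat{H}^{-1}(G,J_F)=\bigoplus_v \hat{H}^{-1}(G^v,F_v^\times)$, and cup product with the local and global fundamental classes supplies $\hat{H}^{-1}(G^v,F_v^\times)\simeq H_2(G^v,\bZ)$ together with $\hat{H}^{-1}(G,C_F)\simeq H_2(G,\bZ)$. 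By functoriality of Tate--Nakayama with respect to subgroup inclusions, the connecting map transports to corestriction, so $\Sha(F/K)\simeq \mathrm{Coker}(\gamma_F)$, and similarly $\Sha(L/K)\simeq \mathrm{Coker}(\gamma_L)$.

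It remains to check that these identifications carry $h$ to $\cR_{F/L}$. The map $h$ is the deflation-type map $\hat{H}^0(G,F^\times) \to \hat{H}^0(H,L^\times)$ induced by the surjection $G \to H$ together with the equality $L^\times = (F^\times)^{\Gal(F/L)}$; it is well defined on $\hat{H}^0$ because $N_{F/K} = N_{L/K}\circ N_{F/L}$. Under the Tate--Nakayama isomorphisms, this deflation corresponds to $\pi_*$ on $H_2$, which by Corollary \ref{compat2} equals $\mathrm{Rsd}^G_H$. Passing to cokernels then identifies $h$ with $\cR_{F/L}$.

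The main obstacle I anticipate is precisely this last naturality check. One must verify that the \emph{same} Tate--Nakayama identifications simultaneously intertwine (i) the connecting map $\hat{H}^{-1}(G,J_F)\to \hat{H}^{-1}(G,C_F)$ with $\gamma_F$, and (ii) the deflation $\hat{H}^0(G,F^\times)\to \hat{H}^0(H,L^\times)$ with $\pi_* = \mathrm{Rsd}^G_H$. This boils down to the fact that the global fundamental class of $F/K$ is compatible under deflation with that of $L/K$, and that the local-to-global pullback of fundamental classes is consistent with the inclusion $G^v \hookrightarrow G$. Both facts are standard ingredients of the functoriality of class field theory, but the careful bookkeeping of degree shifts and fundamental classes is the technical heart of the argument.
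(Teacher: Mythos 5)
Your proposal is correct and follows essentially the same route as the paper: the long exact sequence of $1 \to F^\times \to J_F \to C_F \to 1$ realizes $\Sha(F/K)$ as $\mathrm{Coker}(\hat{H}^{-1}(G,J_F) \to \hat{H}^{-1}(G,C_F))$, Shapiro plus the Tate--Nakayama isomorphisms convert this to $\mathrm{Coker}(\gamma_F)$, and the compatibility of deflation with residuation under these isomorphisms (the ``technical heart'' you flag) is exactly what the paper handles by citing Theorem 1 of Horie. The commutativity of the square is likewise obtained, as in the paper, by summing the squares from Corollary \ref{compat2} over $v$.
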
  
\begin{corollary}\label{sha1cor} $g$ can be identified with the map
\[\cR_{L/L_1} \times \cR_{L/L_2} \colon \mathrm{Coker}(\gamma_{L}) \rightarrow \mathrm{Coker}(\gamma_{L_1}) \times \mathrm{Coker}(\gamma_{L_2}). \]
\end{corollary}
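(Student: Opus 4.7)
The plan is to obtain the description of $g$ as an immediate consequence of Lemma \ref{ShaLem} applied twice. First I would observe that the map $g$ from Lemma \ref{FirstRed} factors as the pairing $(h_1, h_2)$, where $h_i \colon \Sha(L/K) \to \Sha(L_i/K)$ is the map $xN(L^\times) \mapsto xN(L_i^\times)$ for $i=1,2$. This is just the formula for $g$ read coordinate-by-coordinate.

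Next I would note that the hypotheses of Lemma \ref{ShaLem} are satisfied for each pair $(L, L_i)$: the compositum $L = L_1L_2$ is Galois over $K$ (as a compositum of Galois extensions), and each $L_i$ is by assumption a Galois subextension of $L$. Applying Lemma \ref{ShaLem} to the inclusion $L_i \subset L$ therefore identifies $h_i$ with the map $\cR_{L/L_i} \colon \mathrm{Coker}(\gamma_L) \to \mathrm{Coker}(\gamma_{L_i})$ induced by $\mathrm{Rsd}^{\Gal(L/K)}_{\Gal(L_i/K)}$ on the common source $\mathrm{Coker}(\gamma_L)$.

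Finally, taking the product of the two identifications yields the identification of $g = (h_1, h_2)$ with $\cR_{L/L_1} \times \cR_{L/L_2}$. The main (and essentially only) point to check is that the source $\mathrm{Coker}(\gamma_L)$ appearing on each side is truly the same object in both applications of Lemma \ref{ShaLem}; this is immediate because $\gamma_L$ depends only on $\Gal(L/K)$ and its decomposition groups, not on the chosen intermediate Galois extension. There is no real obstacle here: the statement is purely formal once Lemma \ref{ShaLem} is in hand.
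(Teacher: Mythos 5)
Your proposal is correct and matches the paper's intent: the paper states this as an immediate corollary of Lemma \ref{ShaLem} with no separate proof, and your argument (factoring $g$ as the pair $(h_1,h_2)$ and applying Lemma \ref{ShaLem} to each inclusion $L_i\subset L$ with the common source $\mathrm{Coker}(\gamma_L)$) is exactly the intended reasoning.
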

\noindent {\it Proof of Lemma \ref{ShaLem}.} 
Both parts of the lemma are proved in \cite[\S4]{Horie}.  For completeness, we reproduce the argument here.  The reader may find it helpful to consult the properties of the deflation and residuation maps described in \cite[\S3]{PoR} and to compare the following with the proof of proposition 5 of \cite{PoR}.  

Let $C_L$ and $C_F$ denote the idele class groups of $L$ and $F$ respectively, and identify $H$ with the quotient $G / \mathrm{Gal}(F/L)$.  Since the $\mathrm{Gal}(F/L)$-fixed points of 
\begin{equation}\label{seseq1}
1 \rightarrow F^\times \rightarrow J_F \rightarrow C_F \rightarrow 1
\end{equation}
 form the short exact sequence
 \begin{equation}\label{seseq2}
 1 \rightarrow L^\times \rightarrow J_L \rightarrow C_L \rightarrow 1,
 \end{equation} 
 we have the following commutative diagram with exact rows coming from the long exact sequences in cohomology corresponding to the short exact sequences (\ref{seseq1}) and (\ref{seseq2}).
\begin{equation}\label{E:CD1}
\begin{CD}
\hat{H}^{-1}(G,J_F) @>\alpha_F>> \hat{H}^{-1}(G,C_F) @>>>\hat{H}^{0}(G,F^\times) @>\kappa_F>>\hat{H}^{0}(G,J_F)  \\
 @VV\mbox{Def}^G_{H}V @VV\mbox{Def}^G_{H}V @VV\mbox{Def}^G_{H}V @VV\mbox{Def}^G_{H}V \\
 \hat{H}^{-1}(H,J_{L}) @>\alpha_L>> \hat{H}^{-1}(H,C_{L}) @>>> \hat{H}^{0}(H,L^\times) @>\kappa_{L}>> \hat{H}^{0}(H,J_{L})
\end{CD}.
\end{equation}
Recall that
\[ \Sha(L/K) =\frac{K^\times \cap N_{L/K}(J_L)}{N_{L/K}(L^\times)}=\mathrm{Ker}\left( \frac{K^\times}{N_{L/K}(L^\times)}\rightarrow \frac{J_K}{N_{L/K}(J_L)}\right) = \mathrm{Ker}(\kappa_L).  \]
Similarly, $\Sha(F/K) = \mathrm{Ker}(\kappa_F)$, so we can identify $h$ with the deflation map
\[\mathrm{Def}^G_H \colon \mathrm{Ker}(\kappa_F) \rightarrow \mathrm{Ker}(\kappa_L).  \]
Using (\ref{E:CD1}) we identify this with the map 
\[\cD \colon \mathrm{Coker}(\alpha_F) \rightarrow \mathrm{Coker}(\alpha_L) \]
induced by 
\[\mathrm{Def}^G_H \colon \hat{H}^{-1}(G,C_F) \rightarrow \hat{H}^{-1}(H,C_{L}).\]
Next, we apply the isomorphisms 
\[\Phi_G \colon \hat{H}^{-1}(G,C_F) \simeq \hat{H}^{-3}(G,\bZ) = H_2(G,\bZ) \] 
and 
\[\Psi_G\colon \hat{H}^{-1}(G,J_F) \simeq  \bigoplus_v \hat{H}^{-1}(G^v,F_v^\times)  \simeq \bigoplus_v \hat{H}^{-3}(G^v,\bZ)= \bigoplus_v H_2(G^v,\bZ) \] 

(cf. \cite[Chapter 7]{Cass}), together with the the corresponding isomorphisms for $H$, to the groups in the left half of (\ref{E:CD1}).  The discussion in \cite[p.\,198]{Cass} tells us that the diagrams

\[
\begin{CD}
\hat{H}^{-1}(G,J_L) @>\alpha_L>> \hat{H}^{-1}(G,C_L)\\
@V\Psi_GVV @V\Phi_GVV \\
\bigoplus_v H_2(G^v,\bZ) @>\gamma_L>> H_2(G,\bZ)
\end{CD}
\mbox{ }\mbox{ }
\mbox{ }
\mbox{ and  }
\mbox{ }
\mbox{ }
\mbox{ }
\begin{CD}
\hat{H}^{-1}(H,J_F) @>\alpha_F>> \hat{H}^{-1}(H,C_F)\\
@V\Psi_HVV @V\Phi_HVV \\
\bigoplus_v H_2(H^v,\bZ) @>\gamma_F>> H_2(H,\bZ) 
\end{CD}
\]
commute, and Theorem 1 in \cite{Horie} tells us that 
\[
\begin{CD}
\hat{H}^{-1}(G,C_F) @>>\Phi_G>H_2(G,\bZ)  \\
 @VV\mbox{Def}^G_{H}V @V\mbox{Rsd}^G_{H}VV \\
 \hat{H}^{-1}(H,C_{L}) @>\Phi_H>> H_2(H,\bZ) 
\end{CD}
\]
commutes, so these isomorphisms transform 
\[
\begin{CD}
\hat{H}^{-1}(G,J_F) @>\alpha_F>> \hat{H}^{-1}(G,C_F)   \\
 @. @VV\mbox{Def}^G_{H}V  \\
 \hat{H}^{-1}(H,J_{L}) @>\alpha_L>> \hat{H}^{-1}(H,C_{L}) 
\end{CD}
\mbox{ }\mbox{ }
\mbox{ }
\mbox{ into  }
\mbox{ }
\mbox{ }
\mbox{ }
\begin{CD}
\bigoplus_v H_2(G^v,\bZ) @>\gamma_F>> H_2(G,\bZ) \\
@.  @V\mathrm{Rsd}^G_{H}VV \\
\bigoplus_v H_2(H^v,\bZ) @>\gamma_L>> H_2(H,\bZ)
\end{CD}
\]
Thus, we can identify $\cD$ with $\cR_{L/F}$.  Finally, Corollary \ref{compat2} tells us that (\ref {H2Diag1}) is commutative. \hfill $\Box$

Next we give modified versions of Lemma \ref{ShaLem} and Corollary \ref{sha1cor} which are more useful for computations by using two different descriptions of the \textit{Schur multiplier} $H_2(-,\bZ)$.

Let $G$ be a finite group.  Following \cite[I.3]{Beyl}, we define $M(G)$ to be the group given by the Schur-Hopf formula,
\[M(G) := \frac{R \cap [F,F]}{[R,F]} ,\]
where 
\[1 \rightarrow R \rightarrow F \rightarrow G \rightarrow 1 \]
is any free presentation of $G$.  As discussed in \textit{loc. cit.}, the isomorphism class of $M(G)$ is independent of the choice of free presentation, and $M(-)$ becomes a functor from groups to abelian groups once we choose a fixed free presentation for each group $G$.  
\begin{lemma}\label{SchNat} $M(-)$ is naturally isomorphic to $H_2(-,\bZ)$.  That is, for every homomorphism of finite groups $\varphi \colon G \rightarrow H$ there is a commutative diagram of the form
\[
\begin{CD}
H_2(G,\bZ) @>\varphi_*>>H_2(H,\bZ) \\
@V\simeq  VV @V\simeq  VV \\
M(G) @> M(\varphi)>> M(H)
\end{CD}
.\]
\end{lemma}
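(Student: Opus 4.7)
The plan is to invoke Hopf's theorem together with its functoriality. Fix once and for all free presentations $1\to R\to F\to G\to 1$ and $1\to S\to E\to H\to 1$ used to define $M(G)$ and $M(H)$.

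First I would construct the isomorphism $H_2(G,\bZ) \simeq M(G)$ for a single group using the Lyndon--Hochschild--Serre spectral sequence of the normal subgroup $R\triangleleft F$ with trivial $\bZ$ coefficients. Its five-term exact sequence reads
\[H_2(F,\bZ)\to H_2(G,\bZ)\to R/[R,F]\to F/[F,F]\to G/[G,G]\to 0.\]
Since $F$ is free, $H_2(F,\bZ)=0$, so $H_2(G,\bZ)$ injects into $R/[R,F]$ with image equal to the kernel of $R/[R,F]\to F/[F,F]$, namely $(R\cap[F,F])/[R,F]=M(G)$.

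For naturality, I would use that $F$ is a free group to lift the composition $F\to G \stackrel{\varphi}{\to} H$ to a homomorphism $\tilde\varphi\colon F\to E$; automatically $\tilde\varphi(R)\subseteq S$. The triple $(\tilde\varphi|_R,\tilde\varphi,\varphi)$ is then a morphism of short exact sequences, and functoriality of the Lyndon--Hochschild--Serre spectral sequence yields a commutative ladder of five-term sequences. Restricting attention to the $H_2$ terms and their images in $R/[R,F]$ produces the desired commutative square, in which the bottom arrow is the map induced by $\tilde\varphi|_R$ on the quotient $(R\cap[F,F])/[R,F]\to (S\cap[E,E])/[S,E]$; by construction this is $M(\varphi)$.

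The main obstacle is that $\tilde\varphi$ is not canonical: any two lifts $\tilde\varphi_1,\tilde\varphi_2$ differ by a set map $F\to S$, and one must verify that both induce the same map $M(G)\to M(H)$, so that $M(\varphi)$ is well-defined independently of the chosen lift (and of the chosen presentations). This is the standard verification underlying the functoriality of the Schur--Hopf formula, carried out by a direct commutator computation showing that on $R\cap [F,F]$ the discrepancy between $\tilde\varphi_1$ and $\tilde\varphi_2$ lies in $[S,E]$; it is presented in detail in \cite{Beyl}, to which one may simply refer.
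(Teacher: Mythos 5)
Your proposal is correct: the Hopf formula via the five-term exact sequence of the Lyndon--Hochschild--Serre spectral sequence, the lift $\tilde\varphi\colon F\to E$ through the free presentations, and the deferred verification that the induced map on $(R\cap[F,F])/[R,F]$ is independent of the lift together give exactly the standard proof of this naturality statement. The paper itself disposes of the lemma by citing Proposition 5.5 of \cite{Beyl}, which is precisely this result, so your argument is an expanded version of the same approach rather than a different one.
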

\begin{proof} This follows from Proposition 5.5 of \cite[p.\,51]{Beyl}.   
\end{proof}
\begin{lemma}\label{ShaLem2} The diagram
\begin{equation}\label{H2Diag1V2}
\begin{CD}
\bigoplus_v M(G^v) @> \sum_v M(\iota_G^v)>> M(G) \\
@V \left( M(\pi^v) \right) VV @VM(\pi)VV \\
\bigoplus_v M(H^v) @> \sum_v M(\iota_H^v)>> M(H)
\end{CD}\end{equation}
commutes.  Let
\[\delta_F:= \sum_v M(\iota_G^v) \mbox{ }\mbox{ }\mbox{ }\mbox{ and  }\mbox{ }\mbox{ }\mbox{ }
\delta_L:= \sum_v M(\iota_H^v).\]
 Then $h$ can be identified with the map
\[\cS_{F/L}\colon \mathrm{Coker}(\delta_F)\rightarrow \mathrm{Coker}(\delta_L) \]  
induced by $M(\pi)$. 
\end{lemma}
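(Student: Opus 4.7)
The plan is to deduce Lemma \ref{ShaLem2} formally from Lemma \ref{ShaLem} via the natural isomorphism of functors $M(-) \simeq H_2(-,\bZ)$ supplied by Lemma \ref{SchNat}. Intuitively, both statements describe the same square — once at the level of group homology and once at the level of the Schur multiplier — and a natural isomorphism of functors sends commuting squares to commuting squares and preserves cokernels together with the maps induced between them.

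First, I would apply Lemma \ref{SchNat} at each of the four corners of the square to obtain isomorphisms $M(G^v) \simeq H_2(G^v,\bZ)$, $M(G) \simeq H_2(G,\bZ)$, $M(H^v) \simeq H_2(H^v,\bZ)$, and $M(H) \simeq H_2(H,\bZ)$. Applied to the four homomorphisms $\iota_G^v$, $\iota_H^v$, $\pi$, $\pi^v$, the naturality squares from Lemma \ref{SchNat} identify $(\iota_G^v)_*$ with $M(\iota_G^v)$, $(\iota_H^v)_*$ with $M(\iota_H^v)$, $\pi_*$ with $M(\pi)$, and $\pi^v_*$ with $M(\pi^v)$. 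Combining this with Corollary \ref{compat2}, which matches the corestriction and residuation maps at each corner with the homology maps induced by those same inclusions and projections, I conclude that diagram (\ref{H2Diag1V2}) is literally obtained from (\ref{H2Diag1}) by applying the natural isomorphism $M(-) \simeq H_2(-,\bZ)$ corner-by-corner. The commutativity asserted in Lemma \ref{ShaLem2} then follows at once from the commutativity of (\ref{H2Diag1}) established in Lemma \ref{ShaLem}.

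For the identification of $h$, the same corner-by-corner natural isomorphism sends $\gamma_F$ to $\delta_F$ and $\gamma_L$ to $\delta_L$, hence induces isomorphisms $\mathrm{Coker}(\gamma_F) \simeq \mathrm{Coker}(\delta_F)$ and $\mathrm{Coker}(\gamma_L) \simeq \mathrm{Coker}(\delta_L)$. Under these isomorphisms the map $\cR_{F/L}$ induced by $\mathrm{Rsd}^G_H$ corresponds to the map $\cS_{F/L}$ induced by $M(\pi)$. Since Lemma \ref{ShaLem} already identifies $h$ with $\cR_{F/L}$, transporting this identification along $M(-) \simeq H_2(-,\bZ)$ yields the desired identification of $h$ with $\cS_{F/L}$.

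There is no substantial obstacle: the entire content of Lemma \ref{ShaLem2} is that Lemma \ref{ShaLem} transports faithfully along the natural isomorphism $M(-) \simeq H_2(-,\bZ)$. The only point requiring care is that the naturality squares for this isomorphism at each of the four homomorphisms $\iota_G^v, \iota_H^v, \pi, \pi^v$ are compatible with the identifications of Corollary \ref{compat2}, but this is exactly the statement of Lemma \ref{SchNat} applied to those homomorphisms.
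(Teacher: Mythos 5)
Your proof is correct and matches the paper's argument: the paper likewise obtains (\ref{H2Diag1V2}) by applying the natural isomorphism of Lemma \ref{SchNat} (together with Proposition \ref{generalind}) to diagram (\ref{H2Diag1}), and transports the identification of $h$ with $\cR_{F/L}$ into the identification with $\cS_{F/L}$. Your write-up simply spells out the corner-by-corner naturality check that the paper leaves implicit.
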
  
\begin{proof} We obtain (\ref{H2Diag1V2}) by applying Lemmas \ref{generalind} and \ref{SchNat} to (\ref{H2Diag1}).  This proves the commutivity of (\ref{H2Diag1V2}) and allows us to identify $\cR_{F/L}$ with $\cS_{F/L}$.
\end{proof}
\begin{corollary}\label{sha2cor} $g$ can be identified with the map
\[\cS_{L/L_1} \times \cS_{L/L_2} \colon \mathrm{Coker}(\delta_{L}) \rightarrow \mathrm{Coker}(\delta_{L_1}) \times \mathrm{Coker}(\delta_{L_2}). \]
\end{corollary}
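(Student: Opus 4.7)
The plan is to reduce the statement to two applications of Lemma~\ref{ShaLem2}. Under the standing hypothesis (inherited from the Lemma~\ref{ShaLem2} setting) that $L_1$ and $L_2$ are Galois over $K$, the compositum $L = L_1 L_2$ is again Galois over $K$, so each pair $(L, L_i)$ satisfies the hypotheses imposed on $(F, L)$ in Lemma~\ref{ShaLem2} and the Lemma applies to each separately.

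For $i = 1, 2$, let $h_i \colon \Sha(L/K) \to \Sha(L_i/K)$ denote the map $xN(L^\times) \mapsto xN(L_i^\times)$. Comparing with the definition of $g$ in Lemma~\ref{FirstRed}, one sees at once that $g = (h_1, h_2)$ under the canonical product decomposition of the target. By Lemma~\ref{ShaLem2} each $h_i$ is identified with $\cS_{L/L_i}\colon \mathrm{Coker}(\delta_L) \to \mathrm{Coker}(\delta_{L_i})$. The only point requiring a moment's thought is that the source identification $\Sha(L/K) \cong \mathrm{Coker}(\delta_L)$ must agree in both applications of the Lemma; this is clear because that identification is built (via $\Phi_{\mathrm{Gal}(L/K)}$ in the proof of Lemma~\ref{ShaLem}, transported through Lemma~\ref{SchNat}) from $L$ alone and makes no reference to the target field. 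Bundling the two identifications then yields the stated description of $g$. There is no genuine obstacle here: the corollary is a mechanical packaging of Lemma~\ref{ShaLem2}, directly parallel to the way Corollary~\ref{sha1cor} was deduced from Lemma~\ref{ShaLem}.
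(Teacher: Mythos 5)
Your proof is correct and matches the paper's (implicit) argument: the paper states this corollary without proof, treating it as the immediate consequence of Lemma~\ref{ShaLem2} applied to the two pairs $(L,L_1)$ and $(L,L_2)$, exactly as you do. Your remark that the source identification $\Sha(L/K)\cong\mathrm{Coker}(\delta_L)$ is independent of the target field is a worthwhile detail that the paper leaves unstated.
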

The \textit{exterior square} of a finite abelian group $G$ is defined as
\[G \wedge G := G \otimes G / \langle g \otimes g | g \in G  \rangle .\]  

\begin{lemma}\label{SchNat2} If $G$ is abelian, then $M(G)$ is naturally isomorphic to $G \wedge G$.  That is, for every homomorphism of finite abelian groups $\varphi\colon G \rightarrow H$ there is a commutative diagram of the form
\[
\begin{CD}
M(G) @> M(\varphi)>> M(H) \\
@V\simeq  VV @V\simeq  VV \\
G \wedge G @> \varphi \wedge \varphi>> H \wedge H
\end{CD}
\]
where $\varphi \wedge \varphi$ is the map induced by $\varphi \otimes \varphi$.
\end{lemma}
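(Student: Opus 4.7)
The plan is to realize both $M(G)$ and $G\wedge G$ via the commutator pairing attached to any free presentation, and to check that this identification is natural in $\varphi$.

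Fix a free presentation $1\to R\to F\to G\to 1$. Since $G$ is abelian, $[F,F]\subseteq R$, so $R\cap[F,F]=[F,F]$ and hence $M(G)=[F,F]/[R,F]$. Define $\psi\colon G\times G\to M(G)$ by $\psi(\bar x,\bar y)=[x,y]\cdot[R,F]$, where $x,y$ are any lifts of $\bar x,\bar y$ to $F$. Well-definedness uses the standard commutator identities $[ab,c]=a[b,c]a^{-1}\cdot[a,c]$ and $[a,bc]=[a,b]\cdot b[a,c]b^{-1}$ together with the fact that $[R,F]$ is normal in $F$: replacing $x$ by $xr$ or $y$ by $ys$ with $r,s\in R$ changes $[x,y]$ only by an element of $[R,F]$. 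The same identities show that modulo $[R,F]$ the commutator is bilinear in its arguments, so $\psi$ descends to $G\otimes G\to M(G)$; since $[x,x]=1$, it further descends to a homomorphism $\tilde\psi\colon G\wedge G\to M(G)$.

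To see $\tilde\psi$ is an isomorphism, I would produce an inverse. The group $[F,F]/[R,F]$ is generated by classes of commutators $[x,y]$, so $\tilde\psi$ is surjective. For injectivity, I would pick a free basis of $F$ indexed by generators of $G$; the relations in the presentation of the abelian group $G$, combined with the commutator identities, give a complete list of relations among the symbols $\bar x\wedge \bar y$ in $[F,F]/[R,F]$, and one checks that exactly these relations hold in $G\wedge G$ (this is the classical computation, going back to Schur and made functorial in Miller's paper on commutator relations; an explicit exposition is Theorem 2.2.1 in Karpilovsky's book on the Schur multiplier, and it can also be extracted from the arguments around Proposition 5.5 in Beyl--Tappe).

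For naturality in $\varphi\colon G\to H$, choose free presentations $F_G\twoheadrightarrow G$ and $F_H\twoheadrightarrow H$ and lift $\varphi$ to a homomorphism $\tilde\varphi\colon F_G\to F_H$ sending a chosen free basis of $F_G$ to chosen lifts in $F_H$ of its images; this lift sends $R_G$ into $R_H$ and $[F_G,F_G]$ into $[F_H,F_H]$, so it induces $M(\varphi)$ on $M(G)=[F_G,F_G]/[R_G,F_G]$. Since $\tilde\varphi[x,y]=[\tilde\varphi x,\tilde\varphi y]$, the square
\[
\begin{CD}
G\wedge G @>\tilde\psi_G>> M(G) \\
@V\varphi\wedge\varphi VV @VM(\varphi)VV \\
H\wedge H @>\tilde\psi_H>> M(H)
\end{CD}
\]
commutes, which gives the asserted natural isomorphism. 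The main subtlety is purely bookkeeping: verifying independence of the lifts (both of elements of $G$ to $F$ and of $\varphi$ to $\tilde\varphi$), which is where the normality of $[R,F]$ in $F$ is used repeatedly.
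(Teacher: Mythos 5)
Your proposal is correct and amounts to unpacking the standard commutator-pairing argument behind the result that the paper simply cites (Beyl--Tappe, I.4, 4.5 and 4.7): for abelian $G$ one has $[F,F]\subseteq R$, the commutator induces a well-defined bilinear alternating map $G\times G\to [F,F]/[R,F]=M(G)$ (bilinearity modulo $[R,F]$ uses exactly that $[F,F]\subseteq R$), and naturality follows from lifting $\varphi$ to the free presentations. The only step you do not fully carry out is injectivity of $\tilde\psi$, which you correctly defer to the classical computation (or, for finite abelian $G$, to the order count $|M(G)|=|G\wedge G|$); since the paper itself proves the whole lemma by citation, this is a reasonable level of detail.
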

\begin{proof} This follows from 4.5 and 4.7 in \cite[I.4]{Beyl}.
\end{proof}

\begin{lemma}\label{ShaLem3} The diagram
\begin{equation}\label{H2Diag1V3}
\begin{CD}
\bigoplus_v G^v \wedge G^v @> \sum_v \iota_G^v \wedge \iota_G^v>> G \wedge G \\
@V \left( \pi^v \wedge \pi^v \right) VV @V \pi \wedge \pi VV \\
\bigoplus_v H^v \wedge H^v @> \sum_v \iota_H^v \wedge \iota_H^v>> H \wedge H
\end{CD}\end{equation}
commutes.  Let
\[\epsilon_F:= \sum_v \iota_G^v \wedge \iota_G^v \mbox{ }\mbox{ }\mbox{ }\mbox{ and  }\mbox{ }\mbox{ }\mbox{ }
\epsilon_L:= \sum_v \iota_H^v \wedge \iota_H^v.\]
 Then $h$ can be identified with the map
\[\cT_{F/L}\colon \mathrm{Coker}(\epsilon_F)\rightarrow \mathrm{Coker}(\epsilon_L) \]  
induced by $\pi \wedge \pi$.   
\end{lemma}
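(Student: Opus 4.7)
The plan is to deduce Lemma \ref{ShaLem3} from Lemma \ref{ShaLem2} by transporting the content across the natural isomorphism $M(A) \simeq A \wedge A$ for finite abelian groups $A$ supplied by Lemma \ref{SchNat2}. Since the exterior square $G \wedge G$ is only defined when $G$ is abelian, we may assume $G = \mathrm{Gal}(F/K)$ is abelian; then the decomposition subgroups $G^v$, the quotient $H = \mathrm{Gal}(L/K)$, and the local quotients $H^v$ are all abelian as well, so every vertex of the diagram (\ref{H2Diag1V2}) lies in the range where Lemma \ref{SchNat2} applies.

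First I would apply the natural isomorphism $M(A) \simeq A \wedge A$ at each of the eight vertices of (\ref{H2Diag1V2}). By naturality, for each of the homomorphisms $\iota_G^v$, $\iota_H^v$, $\pi^v$, and $\pi$ occurring as arrows, the induced map $M(\varphi)$ is carried to the corresponding map $\varphi \wedge \varphi$. Consequently (\ref{H2Diag1V2}) is transformed, via a square of vertical isomorphisms, into (\ref{H2Diag1V3}). The commutativity of (\ref{H2Diag1V3}) then follows at once from the commutativity of (\ref{H2Diag1V2}) proved in Lemma \ref{ShaLem2}.

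Second, these same isomorphisms identify $\delta_F$ with $\epsilon_F$ and $\delta_L$ with $\epsilon_L$, hence induce isomorphisms $\mathrm{Coker}(\delta_F) \simeq \mathrm{Coker}(\epsilon_F)$ and $\mathrm{Coker}(\delta_L) \simeq \mathrm{Coker}(\epsilon_L)$ that intertwine the map $\cS_{F/L}$ induced by $M(\pi)$ with the map $\cT_{F/L}$ induced by $\pi \wedge \pi$. Combining this with the identification of $h$ with $\cS_{F/L}$ from Lemma \ref{ShaLem2} yields the desired identification of $h$ with $\cT_{F/L}$.

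There is no real obstacle here: the proof is essentially a transcription across a natural isomorphism. The only points worth verifying explicitly are that Lemma \ref{SchNat2} applies at every vertex of the diagram (immediate from the abelianness of $G$) and that naturality covers each arrow in the diagram (which is exactly the content of Lemma \ref{SchNat2}).
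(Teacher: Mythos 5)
Your proposal is correct and follows essentially the same route as the paper: the paper's proof likewise obtains (\ref{H2Diag1V3}) by applying Lemma \ref{SchNat2} to (\ref{H2Diag1V2}), deducing both the commutativity and the identification of $\cS_{F/L}$ with $\cT_{F/L}$. Your explicit remark that $G$ must be abelian for the exterior squares to be defined is a sensible clarification of a hypothesis the paper leaves implicit.
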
  
\begin{proof} We obtain (\ref{H2Diag1V3}) by applying Lemma \ref{SchNat2} to (\ref{H2Diag1V2}).  This proves the commutivity of (\ref{H2Diag1V3}) and allows us to identify $\cS_{F/L}$ with $\cT_{F/L}$. 
\end{proof}
\begin{corollary}\label{sha3cor} $g$ can be identified with the map
\[\cT_{L/L_1} \times \cT_{L/L_2} \colon \mathrm{Coker}(\epsilon_{L}) \rightarrow \mathrm{Coker}(\epsilon_{L_1}) \times \mathrm{Coker}(\epsilon_{L_2}). \]
\end{corollary}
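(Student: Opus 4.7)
The plan is to reduce Corollary \ref{sha3cor} directly to Lemma \ref{ShaLem3}, in complete parallel with the derivations of Corollaries \ref{sha1cor} and \ref{sha2cor} from Lemmas \ref{ShaLem} and \ref{ShaLem2} respectively. The map $g$ factors as the product $g = (h_1, h_2)$, where each component $h_i \colon \Sha(L/K) \to \Sha(L_i/K)$ sends $x N(L^\times)$ to $x N(L_i^\times)$. This is precisely the map $h$ appearing in Lemma \ref{ShaLem3}, applied with $F = L$ and with the role of the subfield played by $L_i$.

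The first step is to apply Lemma \ref{ShaLem3} with $F = L$ and $L = L_1$, yielding an identification of $h_1$ with $\cT_{L/L_1} \colon \mathrm{Coker}(\epsilon_L) \to \mathrm{Coker}(\epsilon_{L_1})$. The second step is to apply it again with $F = L$ and $L = L_2$, identifying $h_2$ with $\cT_{L/L_2}$. The compatibility needed to pair these two identifications is that both use the same isomorphism $\Sha(L/K) \simeq \mathrm{Coker}(\epsilon_L)$ on the source. This compatibility is automatic because that isomorphism is built in the proof of Lemma \ref{ShaLem} out of the cohomological data $(G, J_L, C_L)$ attached to $L/K$ alone, and is then transported through Lemmas \ref{SchNat} and \ref{SchNat2}, none of which involve the sub-extension. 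Pairing the two identifications therefore identifies $g = (h_1, h_2)$ with $\cT_{L/L_1} \times \cT_{L/L_2}$, which is the claim.

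I do not expect any substantive obstacle, since the argument is formally identical to the passage from Lemma \ref{ShaLem} to Corollary \ref{sha1cor}. The one caveat worth flagging is that the exterior square $G \wedge G$ is only defined for abelian $G$, so invoking Lemma \ref{ShaLem3} tacitly requires $\Gal(L/K)$ (and hence each $\Gal(L_i/K)$) to be abelian; this is precisely the setting for the intended application to Theorem 1 in $\S 4$.
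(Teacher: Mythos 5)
Your proposal is correct and matches the paper's (implicit) argument: the paper states this corollary without proof as an immediate consequence of Lemma \ref{ShaLem3}, applied componentwise with $F=L$ and subfield $L_1$, $L_2$ respectively, exactly as you describe. Your caveat about the exterior-square description requiring $\Gal(L/K)$ abelian is apt and consistent with the paper's intended use in \S 4.
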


\section{The Second Obstruction}
We begin this section by constructing an exact sequence that contains the map \[f \colon \Sha(L_1/K) \times \Sha(L_2/K) \rightarrow \Sha(L_1,L_2/K) \]
defined by
\[\left(x  N(L_1^\times) ,yN(L_2^\times)    \right) \mapsto xy^{-1}N(L_1^\times)N(L_2^\times).\] 

\begin{proposition}\label{SixTerm} If $L_1,L_2$ is a pair of finite extensions of $K$, then there is an exact sequence of the form
\vskip 2mm
\begin{equation}\label{Six} 1 \rightarrow \frac{K^\times \cap N(J_{L_1})\cap N(J_{L_2})}{N(L_1^\times) \cap N(L_2^\times)} \rightarrow \Sha(L_1/K) \times \Sha(L_2/K)  \stackrel{f}{\rightarrow}  \Sha(L_1,L_2/K) \rightarrow
\end{equation}
\[\rightarrow \frac{J_K}{K^\times \left(N(J_{L_1})\cap  N(J_{L_2}) \right)} \rightarrow \frac{J_K}{K^\times N(J_{L_1})} \times \frac{J_K}{K^\times N(J_{L_2})} \rightarrow \frac{J_K}{K^\times N(J_{L_1})N(J_{L_2})} \rightarrow 1. \]
\vskip 2mm
\end{proposition}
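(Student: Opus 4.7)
The plan is to split the sequence at $\Sha(L_1, L_2/K)$, building the left half via the snake lemma and the right half as a standard Mayer--Vietoris-type sequence for two subgroups of $J_K$, then gluing the two halves through a natural isomorphism.

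For the left half, I apply the snake lemma to the commutative diagram with exact rows
\[
\begin{CD}
1 @>>> N(L_1^\times) \!\times\! N(L_2^\times) @>>> (K^\times \!\cap\! N(J_{L_1})) \!\times\! (K^\times \!\cap\! N(J_{L_2})) @>>> \Sha(L_1/K) \!\times\! \Sha(L_2/K) @>>> 1 \\
@. @VV\mu V @VV\mu V @VVfV \\
1 @>>> N(L_1^\times) N(L_2^\times) @>>> K^\times \cap N(J_{L_1}) N(J_{L_2}) @>>> \Sha(L_1,L_2/K) @>>> 1
\end{CD}
\]
with $\mu(a,b) = ab^{-1}$. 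The left $\mu$ is surjective with kernel the diagonal copy of $N(L_1^\times) \cap N(L_2^\times)$, and the middle $\mu$ has kernel the diagonal copy of $K^\times \cap N(J_{L_1}) \cap N(J_{L_2})$. The snake lemma then yields exactness of the first half of the desired sequence and identifies $\mathrm{coker}\,f$ with $\bigl(K^\times \cap N(J_{L_1}) N(J_{L_2})\bigr)/\bigl((K^\times \cap N(J_{L_1}))(K^\times \cap N(J_{L_2}))\bigr)$; the relevant surjection comes from the composition $K^\times \cap N(J_{L_1}) N(J_{L_2}) \to \Sha(L_1,L_2/K) \to \mathrm{coker}\,f$.

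For the right half, I apply the classical exact sequence $1 \to A \cap B \to A \oplus B \to AB \to 1$ to $A = K^\times N(J_{L_1})$ and $B = K^\times N(J_{L_2})$ inside $J_K$, and then pass to the corresponding quotients of $J_K$; this delivers exactness of the last four terms of the desired sequence and identifies the kernel of the map $J_K/K^\times(N(J_{L_1}) \cap N(J_{L_2})) \to J_K/K^\times N(J_{L_1}) \times J_K/K^\times N(J_{L_2})$ with $\bigl(K^\times N(J_{L_1}) \cap K^\times N(J_{L_2})\bigr)/K^\times(N(J_{L_1}) \cap N(J_{L_2}))$. To glue, I construct a natural isomorphism between the two quotient groups just identified by sending the class of $z = ab$ (for $a \in N(J_{L_1})$, $b \in N(J_{L_2})$) to the class of $a$; composing this isomorphism with the inclusion into $J_K/K^\times(N(J_{L_1}) \cap N(J_{L_2}))$ yields the connecting map $\Sha(L_1,L_2/K) \to J_K/K^\times(N(J_{L_1}) \cap N(J_{L_2}))$.

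I expect the main obstacle to be verifying that the gluing map is well-defined and bijective. Well-definedness and injectivity reduce to the claim that $a \in K^\times(N(J_{L_1}) \cap N(J_{L_2}))$ forces $z \in (K^\times \cap N(J_{L_1}))(K^\times \cap N(J_{L_2}))$, a short manipulation of decompositions. For surjectivity, any $y \in K^\times N(J_{L_1}) \cap K^\times N(J_{L_2})$ can be written as $y = k_1 u_1 = k_2 u_2$ with $k_i \in K^\times$, $u_j \in N(J_{L_j})$, so that $z := k_1 k_2^{-1} = u_2 u_1^{-1}$ lies in $K^\times \cap N(J_{L_1}) N(J_{L_2})$; the decomposition $z = u_1^{-1} \cdot u_2$ has $N(J_{L_1})$-factor $u_1^{-1}$ whose class agrees with $[y^{-1}]$, so since the gluing map is a homomorphism, this forces surjectivity.
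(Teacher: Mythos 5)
Your proof is correct, but it takes a genuinely different decomposition from the paper's. The paper applies Lemma \ref{ShortGrp} twice to produce a single commutative diagram whose two exact rows are the sequences
\[1\longrightarrow\frac{\mathcal A}{\mathcal B\cap\mathcal C}\longrightarrow\frac{\mathcal A}{\mathcal B}\times\frac{\mathcal A}{\mathcal C}\longrightarrow\frac{\mathcal A}{\mathcal B\mathcal C}\longrightarrow 1\]
for $(\mathcal A,\mathcal B,\mathcal C)=(K^\times,N(L_1^\times),N(L_2^\times))$ and $(J_K,N(J_{L_1}),N(J_{L_2}))$, with vertical maps induced by $K^\times\hookrightarrow J_K$; a single application of the snake lemma then produces all six terms of (\ref{Six}) at once, the three kernels giving the first half and the three cokernels the second half. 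You instead build the two halves separately --- the first three terms from a snake lemma whose rows are the defining extensions of the $\Sha$-groups and whose vertical maps are the difference maps $\mu(a,b)=ab^{-1}$, the last three from the analogous sequence for $K^\times N(J_{L_1})$ and $K^\times N(J_{L_2})$ inside $J_K$ --- and then glue them through an explicit isomorphism
\[\frac{K^\times\cap N(J_{L_1})N(J_{L_2})}{(K^\times\cap N(J_{L_1}))(K^\times\cap N(J_{L_2}))}\;\simeq\;\frac{K^\times N(J_{L_1})\cap K^\times N(J_{L_2})}{K^\times\left(N(J_{L_1})\cap N(J_{L_2})\right)}.\]
All the verifications you flag for the gluing map (independence of the decomposition $z=ab$, injectivity, surjectivity) do go through by the subgroup manipulations you sketch. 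What the paper's route buys is economy: the connecting homomorphism and exactness at the two middle terms come for free from one snake lemma, with no gluing step to check. What your route buys is that it makes the cokernel of $f$ completely explicit in two ways: the right-hand side of your gluing isomorphism is precisely the identification (\ref{Sha2Rew}) of $\Sha_2(L_1,L_2/K)$ that the paper has to extract from (\ref{Six}) after the fact, and your left-hand description is an additional, equally usable formula for the second obstruction.
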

The proof of Proposition \ref{SixTerm} uses the following elementary lemma.
\begin{lemma}\label{ShortGrp}
Let $\mathcal{A}$ be an abelian group with subgroups $\mathcal{B}$
and $\mathcal{C}.$ The sequence
$$1 \longrightarrow
\frac{\mathcal{A}}{\mathcal{A}\cap \mathcal{B}} \stackrel{\varphi}{\longrightarrow}
\frac{\mathcal{A}}{\mathcal{B}} \times
\frac{\mathcal{A}}{\mathcal{C}} \stackrel{\psi}{\longrightarrow}
\frac{\mathcal{A}}{\mathcal{B}\mathcal{C}} \longrightarrow 1,
$$
where $\varphi$ and $\psi$ are defined by
$$
\varphi(x\mathcal{A}\cap\mathcal{B}) = (x\mathcal{B} , x\mathcal{C}) \ \ \text{and} \ \
\psi(x\mathcal{B} , y\mathcal{C}) = xy^{-1}\mathcal{B}\mathcal{C},
$$
is exact.
\end{lemma}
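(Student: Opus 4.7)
The lemma is an elementary ``Mayer--Vietoris''-style short exact sequence for an abelian group cut by two subgroups, and I will read the first term as $\mathcal{A}/(\mathcal{B}\cap\mathcal{C})$ rather than the printed $\mathcal{A}/(\mathcal{A}\cap\mathcal{B})$; only the former makes $\varphi$ well defined (the printed form collapses to $\mathcal{A}/\mathcal{B}$ since $\mathcal{B}\subseteq\mathcal{A}$) and only the former matches the shape of the sequence in Proposition \ref{SixTerm}. The whole proof is a one-page diagram chase inside $\mathcal{A}$, which I will organize as the four standard verifications for a short exact sequence.

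First I will check that $\varphi$ and $\psi$ are well-defined group homomorphisms. For $\varphi$, if $xx'^{-1}\in\mathcal{B}\cap\mathcal{C}$ then both components $(x\mathcal{B},x\mathcal{C})$ and $(x'\mathcal{B},x'\mathcal{C})$ coincide. For $\psi$, replacing $(x,y)$ by $(xb,yc)$ with $b\in\mathcal{B}$, $c\in\mathcal{C}$ changes $xy^{-1}$ by $bc^{-1}\in\mathcal{B}\mathcal{C}$; commutativity of $\mathcal{A}$ then makes both maps homomorphisms.

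Next come the two easy endpoints. Injectivity of $\varphi$ is simply the statement that $x\in\mathcal{B}$ and $x\in\mathcal{C}$ together imply $x\in\mathcal{B}\cap\mathcal{C}$. Surjectivity of $\psi$ follows at once from $\psi(z\mathcal{B},\mathcal{C})=z\mathcal{B}\mathcal{C}$, so every coset in $\mathcal{A}/\mathcal{B}\mathcal{C}$ is hit.

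The only step requiring any thought is exactness in the middle. The inclusion $\mathrm{Im}\,\varphi\subseteq\mathrm{Ker}\,\psi$ is one line, from $\psi\circ\varphi(x(\mathcal{B}\cap\mathcal{C}))=xx^{-1}\mathcal{B}\mathcal{C}$. For the reverse inclusion I will start with $(x\mathcal{B},y\mathcal{C})\in\mathrm{Ker}\,\psi$, write $xy^{-1}=bc^{-1}$ with $b\in\mathcal{B}$, $c\in\mathcal{C}$, and set $z:=xb^{-1}=yc^{-1}$. Because $\mathcal{B},\mathcal{C}\subseteq\mathcal{A}$ we have $z\in\mathcal{A}$, and by construction $z\mathcal{B}=x\mathcal{B}$ and $z\mathcal{C}=y\mathcal{C}$, so $\varphi(z(\mathcal{B}\cap\mathcal{C}))=(x\mathcal{B},y\mathcal{C})$. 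No step is genuinely difficult; the entire argument is algebraic housekeeping, with the rewriting $xy^{-1}=bc^{-1}$ as the only place where a small amount of cleverness is needed.
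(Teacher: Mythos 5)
Your proof is correct, and your reading of the first term as $\mathcal{A}/(\mathcal{B}\cap\mathcal{C})$ is the right call: as printed, $\mathcal{A}\cap\mathcal{B}=\mathcal{B}$ and $\varphi$ is not well defined in its second coordinate, whereas the corrected form is exactly what is used in the top and bottom rows of the diagram in the proof of Proposition~\ref{SixTerm}. Note that the paper never actually proves Lemma~\ref{ShortGrp}: the block labelled ``Proof of Lemma~\ref{ShortGrp}'' is in fact the proof of Proposition~\ref{SixTerm} (apply the lemma twice, then the snake lemma to get (\ref{Six})), and the lemma itself is left as an elementary exercise. Your direct verification --- well-definedness, injectivity of $\varphi$, surjectivity of $\psi$ via $\psi(z\mathcal{B},\mathcal{C})=z\mathcal{B}\mathcal{C}$, and the middle exactness by writing $xy^{-1}=bc^{-1}$ and setting $z=xb^{-1}=yc^{-1}$ --- is precisely the argument the author had in mind, and it is complete; there is nothing to add.
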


\noindent {\it Proof of Lemma \ref{ShortGrp}.} We obtain the commutative diagram
\vskip 2mm
\[ 
\begin{CD}
1 @>>> \frac{ K^\times}{ N(L_1^\times)\cap  N(L_2^\times)} @>>> \frac{K^\times}{ N(L_1^\times)} \times \frac{K^\times}{ N(L_2^\times)} @>>> \frac{K^\times}{ N(L_1^\times)N(L_2^\times)} @>>>1  \\
@. @VVV @VVV @VVV @. \\
1 @>>> \frac{ J_K}{N(J_{L_1})\cap  N(J_{L_2})} @>>> \frac{J_K}{ N(J_{L_1})} \times \frac{J_K}{ N(J_{L_2})} @>>> \frac{J_K}{ N(J_{L_1})N(J_{L_2})} @>>>1
\end{CD}
\]
\vskip 3mm
by applying Lemma \ref{ShortGrp} twice.  The top row corresponds to $\mathcal{A}=K^\times$, $\mathcal{B}=N(L_1^\times)$, and $\mathcal{C}=N(L_2^\times)$, while the bottom row corresponds to $\mathcal{A}=J_K$, $\mathcal{B}=N(J_{L_1})$, and $\mathcal{C}=N(J_{L_2})$.  The vertical maps are induced by the inclusion $K^\times \rightarrow J_K$.  Applying the snake lemma to this diagram gives (\ref{Six}).\hfill $\Box$

It follows from the exactness of (\ref{Six}) that 
\begin{equation}\label{Sha2Rew}
\Sha_2(L_1,L_2/K) \simeq  \frac{K^\times N(J_{L_1})\cap K^\times N(J_{L_2})}{K^\times( N(J_{L_1})\cap  N(J_{L_2}))}. 
\end{equation}

We give an upper bound for the order of $\Sha_2(L_1,L_2/K)$ by using class field theory to estimate the order of this quotient.  For a global field $F$, we let $C_F$ denote the idele class group of $F$.

\begin{lemma}\label{Sha2est} Let $L_1,L_2$ be a pair of finite extensions of $K$, and let $L=L_1L_2.$  If $M_i$ is the maximal abelian subextension of $L_i/K$, and $M$ is the maximal abelian subextension of $L/K$, then
\[|\Sha_2(L_1,L_2/K)| \leq \frac{[M:K]}{[M_1M_2:K]}. \]
In particular, if $M=M_1M_2$, then the second obstruction is trivial.  
\end{lemma}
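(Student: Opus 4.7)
The plan is to apply class field theory directly to the identification
\[\Sha_2(L_1,L_2/K) \simeq \frac{K^\times N(J_{L_1}) \cap K^\times N(J_{L_2})}{K^\times(N(J_{L_1}) \cap N(J_{L_2}))}\]
from (\ref{Sha2Rew}), computing the numerator exactly and bounding the denominator from below by a subgroup of $J_K$ whose index can be read off from the Artin map.

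For the numerator, I would invoke the norm limitation theorem to replace each $K^\times N(J_{L_i})$ by $K^\times N(J_{M_i})$. Artin reciprocity then identifies the intersection $K^\times N(J_{M_1}) \cap K^\times N(J_{M_2})$ with the kernel of the combined restriction map $J_K \rightarrow \Gal(M_1/K) \times \Gal(M_2/K)$. Since the natural map $\Gal(M_1M_2/K) \hookrightarrow \Gal(M_1/K) \times \Gal(M_2/K)$ is injective, this kernel coincides with the kernel of $J_K \rightarrow \Gal(M_1M_2/K)$, namely $K^\times N(J_{M_1M_2})$.

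For the denominator, transitivity of the norm map gives $N(J_L) \subseteq N(J_{L_1}) \cap N(J_{L_2})$, and another application of the norm limitation theorem yields $K^\times N(J_M) = K^\times N(J_L) \subseteq K^\times(N(J_{L_1}) \cap N(J_{L_2}))$. Combining the two bounds, $|\Sha_2(L_1,L_2/K)|$ is bounded above by $[K^\times N(J_{M_1M_2}) : K^\times N(J_M)]$, and since $[J_K : K^\times N(J_F)] = [F:K]$ for any finite abelian extension $F/K$, this index is precisely $[M:K]/[M_1M_2:K]$. When $M = M_1M_2$ the bound collapses to $1$, proving the final assertion.

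The substantive ingredients are the norm limitation theorem and Artin reciprocity; everything else is a formal manipulation of subgroups of $J_K$ combined with transitivity of the norm. The step I expect to require the most care is the numerator computation, which depends on the injectivity of $\Gal(M_1M_2/K) \hookrightarrow \Gal(M_1/K) \times \Gal(M_2/K)$ together with the compatibility of the Artin maps under restriction, so that the kernel of the product map really is $K^\times N(J_{M_1M_2})$ and not merely contained in it.
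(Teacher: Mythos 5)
Your proposal is correct and follows essentially the same route as the paper: both bound the denominator of (\ref{Sha2Rew}) from below by $K^\times N(J_L)$, reduce to maximal abelian subextensions via the norm limitation theorem, identify the numerator with $K^\times N(J_{M_1M_2})$ (the paper cites Artin--Tate for $N(C_{M_1})\cap N(C_{M_2})=N(C_{M_1M_2})$ where you derive it from Artin reciprocity), and finish with the index formula $[J_K:K^\times N(J_F)]=[F:K]$. The only cosmetic difference is that the paper phrases the computation in terms of idele class groups $C_F$ rather than subgroups of $J_K$.
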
 
\begin{proof} Clearly $N(J_{L}) \leq N(J_{L_1}) \cap N(J_{L_2})$, so it follows from (\ref{Sha2Rew}) that $\Sha_2(L_1,L_2/K)$ is isomorphic to a quotient of 
\begin{equation}\label{Sha2Rew2}
\frac{K^\times N(J_{L_1})\cap K^\times N(J_{L_2})}{K^\times N(J_{L}) } \simeq \frac{N(C_{L_1}) \cap N(C_{L_2})}{N(C_L)}.
\end{equation}
According to \cite[Exercise 8]{Cass},
\[N(C_L)=N(C_M), \mbox{ } \mbox{ } \mbox{ } \mbox{ } \mbox{ } \mbox{ } \mbox{ }N(C_{L_i})=N(C_{M_i}),\]
by \cite[p.\,55]{ArT},
\[N(C_{M_1}) \cap N(C_{M_2})= N(C_{M_1M_2}),\]
and by \cite[p.\,172 Theorem 5.1 B]{Cass},
\[[C_K:N(C_M)]=[M:K] \mbox{ } \mbox{ } \mbox{ } \mbox{ and } \mbox{ } \mbox{ } \mbox{ }[C_K:N(C_{M_1M_2})]=[M_1M_2:K], \] 
so 
\[|\Sha_2(L_1,L_2/K) | \leq \left| \frac{N(C_{L_1}) \cap N(C_{L_2})}{N(C_L)} \right| = \left| \frac{N(C_{M_1M_2})}{N(C_M)}   \right| = \frac{[M:K]}{[M_1M_2:K]}. \]
\end{proof}
Another approach to the second obstruction is to consider the map
\[\varphi \colon  J_{L_1}/L^{\times}_1 N_{L/L_1}(J_L) \times
J_{L_2}/L^{\times}_2 N_{L/L_2}(J_L) \longrightarrow J_{K}/K^{\times}
N_{L/K}(J_L) \]
induced by the product of norm maps $N_{L_1/K}$ and $N_{L_2/K}$ as in \cite{PlR} and \cite{PoR}.  
\begin{lemma}\label{L:2}If $\varphi$ is injective, then
\begin{equation}\label{KeyIdentity}
K^\times \cap N(J_{L_1})N(J_{L_2}) = (K^\times \cap N(J_{L}))N(L_1^\times)N(L_2^\times) .
\end{equation}
In particular, $\Sha_2(L_1,L_2/K)=\{ 1\}$.  \end{lemma}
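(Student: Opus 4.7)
The plan is to feed injectivity of $\varphi$ a carefully chosen element and read off both the key identity (\ref{KeyIdentity}) and the triviality of $\Sha_2$ as consequences. The inclusion $\supseteq$ in (\ref{KeyIdentity}) is immediate, since $N(J_L) \subseteq N(J_{L_i})$ and each $N(L_i^\times) \subseteq K^\times \cap N(J_{L_i})$, so the substance is entirely in the reverse inclusion. First I would fix an arbitrary $x \in K^\times \cap N(J_{L_1})N(J_{L_2})$ and write $x = N_{L_1/K}(a_1)\,N_{L_2/K}(a_2)$ with $a_i \in J_{L_i}$. The pair $(\overline{a_1},\overline{a_2})$ in the source of $\varphi$ is sent to $x \cdot K^\times N_{L/K}(J_L)$, which is trivial because $x \in K^\times$.

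Injectivity of $\varphi$ then forces each $a_i$ into $L_i^\times N_{L/L_i}(J_L)$, so one can factor $a_i = \alpha_i \cdot N_{L/L_i}(b_i)$ with $\alpha_i \in L_i^\times$ and $b_i \in J_L$. Applying $N_{L_i/K}$ and using transitivity of the norm ($N_{L/K} = N_{L_i/K} \circ N_{L/L_i}$) yields
\[x \;=\; N_{L_1/K}(\alpha_1)\, N_{L_2/K}(\alpha_2)\, N_{L/K}(b_1 b_2).\]
The element $y := N_{L/K}(b_1 b_2)$ lies in $N(J_L)$ by construction, and in $K^\times$ because the other three factors do, so $x \in (K^\times \cap N(J_L))\, N(L_1^\times)\, N(L_2^\times)$, completing (\ref{KeyIdentity}).

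For the ``in particular'' claim I would use (\ref{KeyIdentity}) to rewrite
\[\Sha(L_1,L_2/K) \;=\; \frac{(K^\times \cap N(J_L))\, N(L_1^\times) N(L_2^\times)}{N(L_1^\times) N(L_2^\times)},\]
so every class is represented by some $y \in K^\times \cap N(J_L)$. Because $N(J_L) \subseteq N(J_{L_1})$, such a $y$ also represents a class in $\Sha(L_1/K)$, and then $f\bigl(y N(L_1^\times),\, N(L_2^\times)\bigr) = y N(L_1^\times)N(L_2^\times)$ realises this class in $\mathrm{Im}(f)$. Hence $f$ is surjective and $\Sha_2(L_1,L_2/K) = \{1\}$. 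The only genuinely delicate step is the first application of injectivity of $\varphi$: that hypothesis is precisely what lets one break a single ``global'' norm identity into a product of honest field norms times a genuine $L$-norm; everything afterwards is bookkeeping with the transitivity of the norm.
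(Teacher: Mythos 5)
Your proof is correct and is essentially the argument the paper itself only cites (its ``proof'' is just a pointer to Lemma 3 of \cite{PoR}): push the pair $(\overline{a_1},\overline{a_2})$ through $\varphi$, note its image is trivial because $x\in K^\times$, use injectivity to write $a_i=\alpha_i N_{L/L_i}(b_i)$, and collect terms via transitivity of the norm. Your derivation of the ``in particular'' clause, exhibiting every class of $\Sha(L_1,L_2/K)$ as $f\bigl(yN(L_1^\times),N(L_2^\times)\bigr)$ for $y\in K^\times\cap N(J_L)$, is also correct and makes explicit a step the paper leaves implicit.
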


\begin{proof}
See the proof of Lemma 3 in \cite{PoR}.
\end{proof}  

\begin{corollary}\label{ShaSurjCor} If $L_1 \subset L_2$ and $\varphi$ is injective, then the natural map 
 \[\Sha(L_2/K) \rightarrow \Sha(L_1/K) \]
 is surjective.
\end{corollary}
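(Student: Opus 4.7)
The plan is to deduce the corollary directly from the key identity \eqref{KeyIdentity} supplied by Lemma \ref{L:2}, after simplifying it using the hypothesis $L_1 \subset L_2$.

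First I would observe that under the assumption $L_1 \subset L_2$ we have $L = L_1 L_2 = L_2$. Consequently $N(J_L) = N(J_{L_2})$, and moreover $N(J_{L_2}) \subset N(J_{L_1})$ (because $N_{L_2/K} = N_{L_1/K} \circ N_{L_2/L_1}$) and $N(L_2^\times) \subset N(L_1^\times)$. In particular,
\[
N(J_{L_1})N(J_{L_2}) = N(J_{L_1}) \quad \text{and} \quad N(L_1^\times) N(L_2^\times) = N(L_1^\times).
\]
Substituting these equalities into \eqref{KeyIdentity} collapses the identity to
\[
K^\times \cap N(J_{L_1}) = \bigl( K^\times \cap N(J_{L_2}) \bigr) N(L_1^\times).
\]

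Next I would read off surjectivity from this equality. The natural map $\Sha(L_2/K) \to \Sha(L_1/K)$ is induced by the inclusion $K^\times \cap N(J_{L_2}) \hookrightarrow K^\times \cap N(J_{L_1})$ together with the inclusion $N(L_2^\times) \subset N(L_1^\times)$, so it sends $x N(L_2^\times) \mapsto x N(L_1^\times)$. Given any class $a N(L_1^\times) \in \Sha(L_1/K)$ with $a \in K^\times \cap N(J_{L_1})$, the displayed equality writes $a = x n$ with $x \in K^\times \cap N(J_{L_2})$ and $n \in N(L_1^\times)$. Then $xN(L_2^\times) \in \Sha(L_2/K)$ maps to $xN(L_1^\times) = aN(L_1^\times)$, proving surjectivity.

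There is no real obstacle here: once one recognizes that $L_1 \subset L_2$ forces $L = L_2$ and trivializes one side of each product in \eqref{KeyIdentity}, the statement is an immediate reinterpretation of Lemma \ref{L:2}. (One can also check that the hypothesis ``$\varphi$ injective'' reduces in this setting to injectivity of the single map $J_{L_1}/L_1^\times N_{L_2/L_1}(J_{L_2}) \to J_K/K^\times N_{L_2/K}(J_{L_2})$, since the $L_2$-factor of the source of $\varphi$ is trivial when $L = L_2$; but this observation is not needed for the proof itself.)
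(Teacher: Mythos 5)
Your proof is correct and is essentially the paper's own argument: the paper likewise specializes the identity \eqref{KeyIdentity} to the case $L=L_1L_2=L_2$, obtaining $K^\times \cap N(J_{L_1}) = (K^\times \cap N(J_{L_2}))N(L_1^\times)$, from which surjectivity is immediate. You have merely spelled out the collapsing of the products and the final surjectivity step, which the paper leaves implicit.
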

\begin{proof} In this case, (\ref{KeyIdentity}) takes the form
\[ K^\times \cap N(J_{L_1}) = (K^\times \cap N(J_{L_2}))N(L_1^\times) .\]
\end{proof}

Lemma 3 of \cite{PoR} says that $\varphi$ is a bijection if $L_1,L_2$ is a pair of linearly disjoint Galois extensions of $K$.  This result can be strengthened as follows.  
\begin{lemma}\label{L:3B}Let $L_1,L_2$ be a pair of Galois extensions of $K$, let $L=L_1L_2$ and $E=L_1 \cap L_2$, and let $(-)^{ab}$ denote the abelianization functor.  $\varphi$ is injective iff the natural map
$\Gal(L/E)^{ab} \rightarrow \Gal(L/K)^{ab} $
is injective.  In particular, $\varphi$ is injective whenever $L_1$ and $L_2$ are both abelian extensions of $K$.    
\end{lemma}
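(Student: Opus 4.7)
The plan is to translate $\varphi$ into a homomorphism of Galois groups via Artin reciprocity, so that its injectivity becomes a transparent group-theoretic condition. Set $G = \Gal(L/K)$ and $H_i = \Gal(L/L_i)$. Since $L_i/K$ is Galois we have $H_i \triangleleft G$; since $L = L_1L_2$ we have $H_1 \cap H_2 = 1$; and by the Galois correspondence $\Gal(L/E) = H_1 H_2$.

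By class field theory, the Artin maps induce canonical isomorphisms
\[ J_{L_i}/L_i^\times N_{L/L_i}(J_L) \simeq H_i^{ab} \qquad \text{and} \qquad J_K/K^\times N_{L/K}(J_L) \simeq G^{ab},\]
and the standard compatibility of the reciprocity map with the norm identifies the map on quotients induced by $N_{L_i/K}$ with the map $H_i^{ab} \to G^{ab}$ coming from the inclusion $H_i \hookrightarrow G$. Hence $\varphi$ is identified with a homomorphism of the form
\[\Phi\colon H_1^{ab} \times H_2^{ab} \longrightarrow G^{ab},\qquad (h_1, h_2) \mapsto \bar\iota_1(h_1)\bar\iota_2(h_2)^{-1}.\]
Getting this identification right — in particular verifying that the norm corresponds to subgroup inclusion on the Galois side after abelianization — is the main technical point; everything else is pure group theory.

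Because $H_1$ and $H_2$ are normal in $G$ with $H_1 \cap H_2 = 1$, a short commutator calculation shows that elements of $H_1$ commute with those of $H_2$, so multiplication yields an isomorphism $H_1 \times H_2 \xrightarrow{\sim} H_1 H_2 = \Gal(L/E)$. Combining this with the natural isomorphism $(H_1 \times H_2)^{ab} \simeq H_1^{ab} \times H_2^{ab}$, we obtain a factorization
\[ \Phi\colon H_1^{ab} \times H_2^{ab} \xrightarrow{\sim} \Gal(L/E)^{ab} \xrightarrow{j} G^{ab},\]
where $j$ is induced by the inclusion $\Gal(L/E) \hookrightarrow G$ (the inverse in the definition of $\Phi$ is absorbed by composing with the involution $h_2 \mapsto h_2^{-1}$, which does not affect the kernel). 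Hence $\varphi$ is injective iff $j$ is, proving the first claim. For the final assertion, if $L_1/K$ and $L_2/K$ are both abelian then so is $L/K$, so $G$ is abelian; in this case $\Gal(L/E)^{ab} = \Gal(L/E)$, and $j$ is simply the inclusion of a subgroup of an abelian group, hence injective.
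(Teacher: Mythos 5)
Your proof is correct, and it takes a somewhat different route from the paper's. The paper factors $\varphi$ as $\varphi_0 \colon J_{L_1}/L_1^\times N_{L/L_1}(J_L) \times J_{L_2}/L_2^\times N_{L/L_2}(J_L) \to J_E/E^\times N_{L/E}(J_L)$ followed by $N_{E/K}$, invokes Lemma 3 of \cite{PoR} to conclude that $\varphi_0$ is an isomorphism because $L_1,L_2$ are linearly disjoint Galois extensions of $E$, and then applies the compatibility of the Tate/reciprocity isomorphisms with corestriction only to the single remaining map $N_{E/K}$. You instead apply reciprocity to all three idele-class quotients at once, converting $\varphi$ into $\Phi \colon H_1^{ab}\times H_2^{ab}\to G^{ab}$, and then handle the rest by pure group theory: $H_1,H_2\triangleleft G$ with $H_1\cap H_2=1$ forces them to commute elementwise, so $H_1\times H_2\simeq H_1H_2=\Gal(L/E)$ and $H_1^{ab}\times H_2^{ab}\simeq \Gal(L/E)^{ab}$. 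Your group-theoretic isomorphism is precisely the Galois-side shadow of the statement that $\varphi_0$ is an isomorphism, so the two arguments are essentially equivalent in content; what yours buys is self-containedness (no appeal to the earlier multinorm lemma) and an explicit description of $\varphi$ as a map of Galois groups, at the cost of having to verify the norm--corestriction compatibility for the maps from both $L_1$ and $L_2$ rather than just from $E$ --- which, as you note, is the one genuinely class-field-theoretic input and is the same citation (\cite[p.\,197]{Cass}) the paper uses.
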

\begin{proof} We can factor $\varphi$ as 
\[ J_{L_1}/L^{\times}_1 N_{L/L_1}(J_L) \times
J_{L_2}/L^{\times}_2 N_{L/L_2}(J_L) \stackrel{\varphi_0}{\longrightarrow}  J_{E}/E^{\times}
N_{L/E}(J_L) \stackrel{N_{E/K}}{\longrightarrow} J_{K}/K^{\times}N_{L/K}(J_L),\]
where $\varphi_0$ is induced by the product of the norm maps $N_{L_1/E}$ and $N_{L_2/E}.$  $L_1,L_2$ is a pair of linearly disjoint Galois extensions of $E$, so $\varphi_0$ is an isomorphism.  It follows that $\varphi$ is injective iff $N_{E/K}$ is injective.  Since the Tate isomorphisms commute with corestriction \cite[p.\,197]{Cass}, there is a commutative diagram 
\[
\begin{CD}
J_{E}/E^{\times}N_{L/E}(J_L) @>N_{E/K}>>  J_K/K^{\times} N_{L/K}(J_L) \\
@VV\simeq V   @VV\simeq V\\
\Gal(L/E)^{ab} @>>> \Gal(L/K)^{ab}
\end{CD}
\]
and the claim follows.
\end{proof}
\section{Proof of Theorem 1}
Let $L_1,L_2$ be a pair of finite abelian extensions of $K$, let $L=L_1L_2$, and let $E=L_1 \cap L_2$.  We begin the proof of Theorem 1 by computing $\mathrm{Coker}(g)$.  
 
\begin{lemma}\label{LastBlow}
\[\mathrm{Coker}(g) \simeq \Sha(E/K)  .\]
\end{lemma}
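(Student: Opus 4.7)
The plan is to apply Corollary \ref{sha3cor} to identify $g$ with a map of exterior-square cokernels and then exhibit an explicit isomorphism onto the analogous description of $\Sha(E/K)$ given by Lemma \ref{ShaLem3}. Set $G = \Gal(L/K)$, $G_i = \Gal(L_i/K)$, $G_E = \Gal(E/K)$; all four groups are abelian by hypothesis. Let $\pi_i\colon G \to G_i$ and $q_i\colon G_i \to G_E$ denote the restriction maps, so $q_i \circ \pi_i = \pi_E$ for the projection $\pi_E\colon G \to G_E$. Since $L = L_1L_2$ and $E = L_1 \cap L_2$, Galois theory identifies $G$ with the pullback $G_1 \times_{G_E} G_2$, yielding a short exact sequence of abelian groups
\[0 \to G \xrightarrow{(\pi_1,\pi_2)} G_1 \times G_2 \xrightarrow{(g_1,g_2)\mapsto q_1(g_1) - q_2(g_2)} G_E \to 0,\]
along with the compatibilities $\pi_i(G^v) = G_i^v$ and $q_i(G_i^v) = G_E^v$ at each place $v$. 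Under Corollary \ref{sha3cor}, $g$ becomes the map $(\pi_1 \wedge \pi_1, \pi_2 \wedge \pi_2)\colon \mathrm{Coker}(\epsilon_L) \to \mathrm{Coker}(\epsilon_{L_1}) \times \mathrm{Coker}(\epsilon_{L_2})$.

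Define $\psi\colon (G_1 \wedge G_1) \oplus (G_2 \wedge G_2) \to G_E \wedge G_E$ by $\psi(x,y) = (q_1 \wedge q_1)(x) - (q_2 \wedge q_2)(y)$. Since each $q_i$ sends $G_i^v$ onto $G_E^v$, $\psi$ descends to cokernels, and the identity $q_i\pi_i = \pi_E$ forces $\psi$ to annihilate the image of $(\pi_1 \wedge \pi_1, \pi_2 \wedge \pi_2)$, so it induces a map $\bar\psi\colon \mathrm{Coker}(g) \to \Sha(E/K)$ which we claim is an isomorphism. Surjectivity is clear: the composite $\Sha(L_1/K) \to \mathrm{Coker}(g) \xrightarrow{\bar\psi} \Sha(E/K)$ is the natural map $h_1$, and applying Lemma \ref{L:3B} to the pair $E, L_1$ (here $\Gal(L_1/E) \hookrightarrow \Gal(L_1/K)$ is a subgroup inclusion of abelian groups, hence trivially injective) together with Corollary \ref{ShaSurjCor} shows that $h_1$ is surjective.

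For injectivity, suppose $\psi(x,y) \in \sum_v G_E^v \wedge G_E^v$. Surjectivity of $q_1 \wedge q_1\colon G_1^v \wedge G_1^v \twoheadrightarrow G_E^v \wedge G_E^v$ provides $x' \in \sum_v G_1^v \wedge G_1^v$ with $(q_1 \wedge q_1)(x') = \psi(x,y)$, so that replacing $x$ by $x - x'$ reduces us to $\psi(x,y) = 0$. Set $w = (q_1 \wedge q_1)(x) = (q_2 \wedge q_2)(y)$ and, using surjectivity of $\pi_E \wedge \pi_E$, choose $z_0 \in G \wedge G$ with $(\pi_E \wedge \pi_E)(z_0) = w$; subtracting $((\pi_1 \wedge \pi_1)(z_0), (\pi_2 \wedge \pi_2)(z_0))$ from $(x,y)$ further reduces to $w = 0$. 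Now $x$ lies in the kernel of $q_1 \wedge q_1$, which for a surjection of abelian groups with kernel $K_1 = \Gal(L_1/E)$ equals the subgroup $K_1 \wedge G_1$ of $G_1 \wedge G_1$ generated by the elements $k \wedge g$ with $k \in K_1$; similarly $y$ lies in $K_2 \wedge G_2$ with $K_2 = \Gal(L_2/E)$. Since $\pi_1$ restricts to an isomorphism $\Gal(L/L_2) \xrightarrow{\sim} K_1$ on which $\pi_2$ vanishes, each generator $k \wedge g$ in the expansion of $x$ lifts to $\tilde k \wedge \tilde g \in G \wedge G$ (with $\tilde k \in \Gal(L/L_2)$ and $\tilde g$ any lift of $g$), which maps to $(k \wedge g, 0)$; summing these and repeating the symmetric construction for $y$ produces $z \in G \wedge G$ mapping to $(x,y)$, proving injectivity. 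The essential obstacle is precisely this injectivity: it rests on the pullback structure $G = G_1 \times_{G_E} G_2$ and combines a local-to-global reduction via the $q_i^v$'s with a two-stage algebraic lifting argument.
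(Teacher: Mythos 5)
Your proof is correct, and your map $\psi$ is exactly the map $P_0$ that the paper uses in Lemma \ref{2ndLastBlow}, so the overall strategy --- reduce via Corollary \ref{sha3cor} to exterior squares and map down to $G_E \wedge G_E$ by the difference of the two restriction maps --- coincides with the paper's. Where you genuinely diverge is in how bijectivity is established. The paper first computes $\mathrm{Coker}(\cT_0) \simeq G_E \wedge G_E$ abstractly: it builds a map $S$ in the opposite direction from a set-theoretic section $\mu$ of $G \rightarrow G_E$, checks bilinearity and surjectivity using the relations of Lemma \ref{wedgerel}, concludes by finiteness that $P$ and $S$ are isomorphisms, and only then incorporates the local terms by a diagram chase using surjectivity of the maps $\cL_i$. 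You instead prove injectivity of $\bar\psi$ head-on, in a single pass that treats the local and global terms together, resting on the identification $\ker(q_i \wedge q_i) = \Gal(L_i/E) \wedge G_i$ and on lifting through the isomorphism $\Gal(L/L_2) \simeq \Gal(L_1/E)$. Your route avoids both the choice of section and the finiteness/counting step, at the cost of the (easy but essential) verification that for a surjection of finite abelian groups $q \colon A \rightarrow B$ with kernel $K$ the kernel of $A \wedge A \rightarrow B \wedge B$ is the image of $K \otimes A$; you should record that verification explicitly, since it is the one place where an unproved algebraic fact carries the whole injectivity argument. Your surjectivity argument via Lemma \ref{L:3B} and Corollary \ref{ShaSurjCor} is more roundabout than necessary (surjectivity of $q_1 \wedge q_1$ already gives it directly, which is how the paper argues), but it is the same mechanism the paper deploys for the reverse inequality in the proof of Theorem 1, so nothing is lost.
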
   
Let $G$, $G_i$, and $G_E$ denote the Galois groups $\Gal(L/K)$, $\Gal(L_i/K)$, and $\Gal(E/K)$ respectively, and let $G^v$, $G_i^v$, and $G_E^v$ denote the decomposition groups of (compatible) fixed extensions of $v$ to the fields $L$, $L_i$, and $E$.  Let $\pi_i\colon G \rightarrow G_{i}$, $\rho\colon G \rightarrow G_E$, $\rho_i\colon G_i \rightarrow G_E$, and $\rho_i^v \colon G_{i}^v \rightarrow G_E^v$ denote the canonical projection maps, and let $\iota^v \colon G^v \rightarrow G$, $\iota_i^v \colon G_{i}^v \rightarrow G_{i}$, and $\iota_E^v \colon G_E^v \rightarrow G_E$ denote the canonical inclusion maps.

By Lemma \ref{ShaLem3} and Corollary \ref{sha3cor} there are commutative diagrams
\begin{equation}\label{triplediag}
\begin{CD}
\bigoplus_v G^v \wedge G^v @>\epsilon_L>> G \wedge G \\
@VVV @V \pi_i \wedge \pi_i VV \\
\bigoplus_v G_i^v \wedge G_i^v @>\epsilon_{L_i}>> G_i \wedge G_i \\
@V\cL_iVV @V \rho_i \wedge \rho_i VV \\
\bigoplus_v G_E^v \wedge G_E^v @>\epsilon_E>> G_E \wedge G_E
\end{CD}
\end{equation}
for $i=1,2$ where 
\[\epsilon_L = \sum_v \iota^v \wedge \iota^v  ,\mbox{ }\mbox{ }\mbox{ }\mbox{ }\mbox{ }\mbox{ }\mbox{ } \epsilon_{L_i} = \sum_v \iota_{i}^v \wedge \iota_{i}^v , \] 
\[ \epsilon_E = \sum_v \iota_E^v \wedge \iota_E^v ,\mbox{ }\mbox{ }\mbox{ and }\mbox{ }\mbox{ }\mbox{ }\mbox{ } \cL_i = \left( \rho_i^v \wedge \rho_i^v \right),\]
and we can identify $g$ with the map
\[\cT \colon \mathrm{Coker}(\epsilon_L) \rightarrow \mathrm{Coker}(\epsilon_{L_1}) \times \mathrm{Coker}(\epsilon_{L_2}) \]
induced by the map
\[\cT_0 \colon G \wedge G \rightarrow (G_{1} \wedge G_{1}) \times (G_{2} \wedge G_{2})\]
defined by 
\[\cT_0( a \wedge b) = \bigg( \pi_1(a) \wedge \pi_1(b) , \pi_2(a) \wedge \pi_2(b) \bigg)    \]
for $a,b \in G$.  

We start by analyzing $\cT_0$.  Given subsets $A$ and $B$ of an abelian group $C$, we let $A \wedge B$ denote the set of all sums in $C \wedge C$ of the form $\sum_i a_i \wedge b_i$ with $a_i \in A$ and $b_i \in B$.  Let $\mu$ be a fixed section of $\rho$ and define a section $\mu_1$ of $\rho_1$ by $\mu_1 = \pi_1 \circ \mu$.       
\begin{lemma}\label{wedgerel}
\[  \bigg(\mu_1(G_E) \wedge \mathrm{Ker}(\rho_1), 0 \bigg) = \cT_0
\bigg( \mu(G_E) \wedge \mathrm{Gal}(L/L_2) \bigg) , \]
\[
 \bigg(\mathrm{Ker}(\rho_1)\wedge \mu_1(G_E), 0 \bigg) = \cT_0 \bigg(\mathrm{Gal}(L/L_2) \wedge \mu(G_E) \bigg),
\]
and
\[
 \bigg( \mathrm{Ker}(\rho_1) \wedge \mathrm{Ker}(\rho_1), 0  \bigg) = \cT_0 \bigg( \mathrm{Gal}(L/L_2)\wedge \mathrm{Gal}(L/L_2) \bigg)  . 
\]
\end{lemma}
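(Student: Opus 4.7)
The plan is to exploit the fact that all fields in sight are abelian over $K$, so $G=\Gal(L/K)$ is abelian, $G\wedge G$ makes sense, and $\Ker(\pi_i)=\Gal(L/L_i)$. The entire lemma will reduce to a single Galois-theoretic observation together with bookkeeping inside $G\wedge G$.

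The first step is to identify the restriction of $\pi_1$ to $\Gal(L/L_2)$. Because $L=L_1L_2$ and $E=L_1\cap L_2$, the map $\pi_1\times\pi_2\colon G\to G_1\times G_2$ is injective with image the fiber product $G_1\times_{G_E}G_2=\{(g_1,g_2):\rho_1(g_1)=\rho_2(g_2)\}$. From this description one reads off immediately that $\pi_1$ carries $\Gal(L/L_2)=\Ker(\pi_2)$ bijectively onto $\Ker(\rho_1)=\Gal(L_1/E)$: indeed $\pi_1(\Ker(\pi_2))\subset\Ker(\rho_1)$ because any element of $\Gal(L/L_2)$ fixes $E=L_1\cap L_2$, and every $g_1\in\Ker(\rho_1)$ is hit because $(g_1,1)$ lies in the fiber product. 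Symmetrically, $\pi_2$ sends $\Gal(L/L_1)$ onto $\Ker(\rho_2)$.

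With that in hand, the second step is the key observation: if $a\in\Gal(L/L_2)=\Ker(\pi_2)$, then for every $b\in G$
\[\cT_0(a\wedge b)=\bigl(\pi_1(a)\wedge\pi_1(b),\,\pi_2(a)\wedge\pi_2(b)\bigr)=\bigl(\pi_1(a)\wedge\pi_1(b),\,0\bigr),\]
which explains the $0$ in the second coordinate of all three right-hand sides. The first coordinate is then determined by applying the surjection $\pi_1|_{\Gal(L/L_2)}\colon\Gal(L/L_2)\twoheadrightarrow\Ker(\rho_1)$ from the previous step, together with $\pi_1\circ\mu=\mu_1$.

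Finally, I would verify the three identities one at a time by chasing representatives. For (1), take $a=\mu(g)$ with $g\in G_E$ and $b\in\Gal(L/L_2)$; the image is $(\mu_1(g)\wedge\pi_1(b),0)$, and as $g$ and $b$ vary (and sums are taken, per the definition of $A\wedge B$), $\pi_1(b)$ sweeps out $\Ker(\rho_1)$ by the first step, producing exactly $(\mu_1(G_E)\wedge\Ker(\rho_1),0)$. Identity (2) is the mirror image with the two wedge factors swapped. For (3), take both $a,b\in\Gal(L/L_2)$; then $\pi_1(a),\pi_1(b)$ sweep $\Ker(\rho_1)$, giving $(\Ker(\rho_1)\wedge\Ker(\rho_1),0)$. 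There is no real obstacle here: the only thing one has to be a little careful about is that $A\wedge B$ is defined via finite sums, but since $\cT_0$ is additive and $\pi_1$ is a group homomorphism, the computation with individual wedges extends to arbitrary sums without further comment.
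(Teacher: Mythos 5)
Your proposal is correct and follows essentially the same route as the paper: the paper's proof is a one-line "direct computation" resting on exactly the facts you isolate, namely $\pi_1(\Gal(L/L_2)) = \Gal(L_1/E) = \mathrm{Ker}(\rho_1)$, $\pi_2(\Gal(L/L_2)) = 1$, $\pi_1 \circ \mu = \mu_1$, and $g_1 \wedge g_2 = 0$ when $g_2 = 1$. Your fiber-product justification of the first fact and the remark about extending from single wedges to finite sums are just explicit versions of what the paper leaves implicit.
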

\begin{proof} Direct computation.  We note that $\pi_1 (\Gal(L/L_2)) = \Gal(L_1/E) = \mathrm{Ker}( \rho _1),$ $\pi_2 (\Gal(L/L_2))= 1$, $\pi_1 (\mu(G_E))= \mu_1(G_E),$ and that $g_1 \wedge g_2 = 0$ whenever $g_2=1$. 
\end{proof}
These identities allow us to compute the cokernel of $\cT_0$.
\begin{lemma}\label{2ndLastBlow}
\[\mathrm{Coker}(\cT_0) \simeq G_E \wedge G_E. \]
\end{lemma}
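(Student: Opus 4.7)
The plan is to build an explicit inverse to a natural surjection onto $G_E \wedge G_E$. I would define
\[\Psi \colon (G_1 \wedge G_1) \times (G_2 \wedge G_2) \longrightarrow G_E \wedge G_E,\qquad (\alpha,\beta) \longmapsto (\rho_1 \wedge \rho_1)(\alpha) - (\rho_2 \wedge \rho_2)(\beta).\]
Since $\rho_1 \circ \pi_1 = \rho_2 \circ \pi_2 = \rho$, a direct check gives $\Psi \circ \cT_0 = 0$, so $\Psi$ descends to a map $\overline{\Psi} \colon \mathrm{Coker}(\cT_0) \to G_E \wedge G_E$. The goal is then to show $\overline{\Psi}$ is an isomorphism by verifying that $\Psi$ is surjective and that $\mathrm{Ker}(\Psi) = \mathrm{Im}(\cT_0)$.

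Surjectivity of $\Psi$ is immediate from the surjectivity of $\rho_1 \wedge \rho_1$. For the kernel identification, suppose $(\alpha,\beta)$ satisfies $(\rho_1 \wedge \rho_1)(\alpha) = (\rho_2 \wedge \rho_2)(\beta) = \gamma$ in $G_E \wedge G_E$. First I would reduce to the case $\gamma = 0$: writing $\gamma = \sum_i x_i \wedge y_i$, the element $\gamma' = \sum_i \mu(x_i) \wedge \mu(y_i) \in G \wedge G$ satisfies $\cT_0(\gamma') = \bigl( \sum_i \mu_1(x_i) \wedge \mu_1(y_i),\, \sum_i \mu_2(x_i) \wedge \mu_2(y_i) \bigr)$ by $\pi_i \circ \mu = \mu_i$, and each coordinate of $\cT_0(\gamma')$ lifts $\gamma$ through $\rho_i \wedge \rho_i$. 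Subtracting $\cT_0(\gamma')$ from $(\alpha,\beta)$ reduces the problem to showing $(\alpha,0) \in \mathrm{Im}(\cT_0)$ for every $\alpha \in \mathrm{Ker}(\rho_1 \wedge \rho_1)$, together with the symmetric statement for the second coordinate.

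The crux is the identity $\mathrm{Ker}(\rho_1 \wedge \rho_1) = \mu_1(G_E) \wedge \mathrm{Ker}(\rho_1) + \mathrm{Ker}(\rho_1) \wedge \mathrm{Ker}(\rho_1)$ as subgroups of $G_1 \wedge G_1$: this kernel is generated by wedges $n \wedge g$ with $n \in \mathrm{Ker}(\rho_1)$ and $g \in G_1$, and any such $g$ decomposes as $g = \mu_1(\rho_1 g) + n'$ with $n' \in \mathrm{Ker}(\rho_1)$, giving $n \wedge g = n \wedge \mu_1(\rho_1 g) + n \wedge n'$. Lemma \ref{wedgerel} then places each of the subgroups $(\mu_1(G_E) \wedge \mathrm{Ker}(\rho_1), 0)$, $(\mathrm{Ker}(\rho_1) \wedge \mu_1(G_E), 0)$, and $(\mathrm{Ker}(\rho_1) \wedge \mathrm{Ker}(\rho_1), 0)$ inside $\mathrm{Im}(\cT_0)$, so $(\mathrm{Ker}(\rho_1 \wedge \rho_1), 0) \subseteq \mathrm{Im}(\cT_0)$; the second coordinate is handled symmetrically.

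The main obstacle I foresee is the bookkeeping with the set-theoretic sections $\mu$ and $\mu_i = \pi_i \circ \mu$: since these need not be group homomorphisms, the sets $\mu_i(G_E) \wedge \mathrm{Ker}(\rho_i)$ must be interpreted as generating sets for subgroups, and one must verify throughout that lifts through $\mu$ remain compatible with the $\pi_i$ in the manner required both for the reduction to $\gamma = 0$ and for invoking Lemma \ref{wedgerel}.
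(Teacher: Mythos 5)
Your proposal is correct, and its second half takes a genuinely different route from the paper's. Your $\Psi$ is exactly the paper's $P_0$, so the descent to $\mathrm{Coker}(\cT_0)$ and the surjectivity argument coincide with the paper's. The divergence is in how injectivity of the induced map is obtained: the paper constructs a surjection in the \emph{opposite} direction, $S \colon G_E \wedge G_E \rightarrow \mathrm{Coker}(\cT_0)$, induced by the bilinear map $(e,f) \mapsto \bigl(\mu_1(e) \wedge \mu_1(f), 0\bigr) + \mathrm{Im}(\cT_0)$ --- with well-definedness and surjectivity both checked via Lemma \ref{wedgerel} --- and then concludes from the finiteness of the two groups. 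You instead prove $\mathrm{Ker}(\Psi) = \mathrm{Im}(\cT_0)$ outright: the reduction by the lift $\gamma'$ (which works because $\rho_i \circ \mu_i = \mathrm{id}_{G_E}$) leaves the inclusions $\bigl(\mathrm{Ker}(\rho_1 \wedge \rho_1), 0\bigr) \subseteq \mathrm{Im}(\cT_0)$ and its mirror, which follow from Lemma \ref{wedgerel} once you know that $\mathrm{Ker}(\rho_1 \wedge \rho_1)$ is generated by the wedges $n \wedge g$ with $n \in \mathrm{Ker}(\rho_1)$. That last fact is the right-exactness of the exterior square applied to $1 \rightarrow \mathrm{Ker}(\rho_1) \rightarrow G_1 \rightarrow G_E \rightarrow 1$; it is standard and your sketch of it is adequate, but it is an input the paper does not need. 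Your route buys a direct identification of the kernel and dispenses with both the auxiliary map $S$ and the finiteness argument; do note that handling the second coordinate requires the variants of Lemma \ref{wedgerel} with the roles of $L_1$ and $L_2$ exchanged, which are not stated in the paper but follow from the identical direct computation (using $\pi_2(\Gal(L/L_1)) = \mathrm{Ker}(\rho_2)$ and $\pi_1(\Gal(L/L_1)) = 1$), so your appeal to symmetry is legitimate.
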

\begin{proof}
Since $\mathrm{Coker}(\cT_0)$ and $G_E \wedge G_E$ are finite groups, it suffices to construct a surjective homomorphism from each to the other.  We define a surjective homomorphism 
\[P_0 \colon (G_{1} \wedge G_{1}) \times (G_{2} \wedge G_{2}) \rightarrow \ G_E \wedge G_E \]
by
\[P_0(a\wedge b , 0) = \rho_1(a) \wedge \rho_1(b) \mbox{ }\mbox{ }\mbox{ }\mbox{ and } \mbox{ }\mbox{ }\mbox{ } P_0(0,c \wedge d) = -\rho_2(c) \wedge \rho_2(d) \]
for $a,b \in G_{1}$ and $c,d \in G_{2}$.   $P_0 \circ \cT_0 =0$, so $P_0$ induces a surjective homomorphism 
\[P \colon \mathrm{Coker}(\cT_0) \rightarrow G_E \wedge G_E.\]  

To get a homomorphism in the other direction, we first define a set map
\[S_0 \colon G_E \times G_E \rightarrow \mathrm{Coker}(\cT_0) \]
by 
\[S_0(e,f) = \bigg(\mu_1(e) \wedge \mu_1(f),0 \bigg) + \mathrm{Im}(\cT_0) \]
for $e,f \in G_E$.  If $e_1,e_2 \in G_E$, then $\mu_1(e_1e_2)\mu_1(e_1)^{-1}\mu_1(e_2)^{-1} \in \mathrm{Ker}(\rho_1)$, so it follows from Lemma \ref{wedgerel} that 
\[S_0(e_1e_2,f)-S_0(e_1,f)-S_0(e_2,f)=\bigg(\mu_1(e_1e_2)\mu_1(e_1)^{-1}\mu_1(e_2)^{-1} \wedge \mu_1(f), 0 \bigg)+\mathrm{Im}(\cT_0) = 0. \] 
A similar calculation can be done for the second argument, so $S_0$ is bilinear and induces a homomorphism $S \colon G_E \wedge G_E \rightarrow \mathrm{Coker}(\cT_0)$.  It remains to show that $S$ is surjective.  Since $\pi_2 \wedge \pi_2$ is surjective, we have 
\[\mathrm{Coker}(\cT_0) = \bigg(G_1 \wedge G_1, 0 \bigg) +\mathrm{Im}(\cT_0), \]
and it suffices to show that  
\[ \bigg(g_1 \wedge g_2,0\bigg)+\mathrm{Im}(\cT_0) =\bigg(( \mu_1 \circ \rho_1)(g_1) \wedge ( \mu_1 \circ \rho_1)(g_2),0\bigg) +\mathrm{Im}(\cT_0) \]
for all $g_1,g_2 \in G_1$.  Since
\[ ( \mu_1 \circ \rho_1)(g_i)g_i^{-1} \in \mathrm{Ker}(\rho_1),\]
for $i=1,2$, this follows from Lemma \ref{wedgerel}. 
\end{proof}
\noindent {\it Proof of Lemma \ref{LastBlow}.} Since $g$ can be identified with $\cT$ and $\Sha(E/K)$ can be identified with $\mathrm{Coker}(\epsilon_E)$, it suffices to prove that $\mathrm{Coker}(\cT) \simeq \mathrm{Coker}(\epsilon_E)$.  $P$ induces a homomorphism
\[ \cP \colon  \mathrm{Coker}(\cT) \rightarrow \mathrm{Coker}(\epsilon_E) \]
which must be surjective since $P$ is surjective.  Since the maps $\cL_i$ in (\ref{triplediag}) are surjective, and since $P$ is injective, a short diagram chase shows that $\cP$ must also be injective.\hfill $\Box$
\vskip 1mm
\noindent {\it Proof of Theorem 1.} By Lemma \ref{Sha2est}, $\Sha_2(L_1,L_2/K)=\{1\}$, so $\Sha(L_1,L_2/K)=\Sha_1(L_1,L_2/K)$.  According to Lemmas \ref{FirstRed} and \ref{LastBlow}, $\Sha(L_1,L_2/K)$ is isomorphic to a quotient of $\Sha(E/K)$.  Since both of these groups are finite, it suffices to show that $\Sha(E/K)$ is isomorphic to a quotient of $\Sha(L_1,L_2/K)$.  Consider the map 
\[j \colon \Sha(L/K) \rightarrow \Sha(L,E/K) =\Sha(E/K)  \]
defined by 
\[xN(L^\times) \mapsto xN(E^\times). \]
$L,E$ is a pair of abelian extensions of $K$, so Lemma \ref{L:3B} and Corollary \ref{ShaSurjCor} guarantee that $j$ is surjective.  $j$ factors through $\Sha(L_1,L_2/K)$, so $\Sha(E/K)$ is a homomorphic image of $\Sha(L_1,L_2/K)$.   \hfill $\Box$

\section{Examples and Discussion} 
In this section we describe several applications of the methods developed in the previous sections and discuss some problems related to the multinorm principle. 
\vskip 1mm
\noindent \newline \textbf{Example 1.} If $L_1, L_2$ is a linearly disjoint pair of finite Galois extensions of a global field $K$, then we can recover the main theorem of \cite{PoR} by using the results from $\S2$ and $\S3$ to prove that $\Sha(L_1,L_2/K)= \{1\}.$  To prove that $\Sha_1(L_1,L_2/K)= \{1 \}$, it suffices to show that
\[g \colon  \Sha(L/K) \rightarrow \Sha(L_1/K) \times \Sha(L_2/K)   \]     
is surjective.  Let $G=\Gal(L/K)$, $G_i=\Gal(L_i/K)$, and let $\pi_i \colon G \rightarrow G_i$ be the natural projection map.  By Corollary \ref{sha2cor} it suffices to show that 
\begin{equation}\label{Msurj}
M(\pi_1) \times M(\pi_2)\colon  M(G) \rightarrow M(G_1) \times M(G_2) 
\end{equation}
is surjective.  Let $\iota_i \colon G_i \rightarrow G$ be the monomorphism corresponding to the natural identification 
\[ G_i = \Gal(L_i/K) \simeq \Gal(L/L_{3-i}) \leq G.\]
  Then $\pi_i \circ \iota_i = \mathrm{id}_{G_i}$ and $\pi_{3-i} \circ \iota_i$ is the trivial homomorphism, so $ M(\pi_i) \circ M(\iota_i) = \mathrm{id}_{M(G_i)}$ while $M(\pi_{3-i}) \circ M(\iota_i)$ is the zero map, and the surjectivity of (\ref{Msurj}) follows.
  
If $M_i$ and $M$ denote the maximal abelian subextensions of $L_i/K$ and $L_1L_2/K$ respectively, then $M=M_1M_2$, so $\Sha_2(L_1,L_2/K) = \{1 \}$ by Lemma \ref{Sha2est}.

\vskip 1mm
\noindent \newline \textbf{Example 2.} If $L_1, L_2$ is a pair of finite extensions of a global field $K$, let us say that $L_1,L_2$ satisfies the \textit{intersection principle} if 
\[K^\times \cap N(J_{L_1}) \cap N(J_{L_2}) = N(L_1^\times) \cap N(L^\times). \]
The obstruction to this local-global principle is given by
\[ \Sha_\cap(L_1,L_2/K) := \frac{K^\times \cap N(J_{L_1}) \cap N(J_{L_2}) } { N(L_1^\times) \cap N(L^\times)}.\]
This group naturally arises as the first nontrivial term in (\ref{Six}), and we can truncate (\ref{Six}) to obtain the short exact sequence
\begin{equation}\label{int1}
1 \rightarrow \Sha_\cap(L_1,L_2/K) \rightarrow \Sha(L_1/K) \times \Sha(L_2/K) \rightarrow 
\Sha_1(L_1,L_2/K) \rightarrow 1 . 
\end{equation}
One possibility this suggests is that we may be able to learn about the first obstruction indirectly by studying the intersection problem.  On the other hand, we can use (\ref{int1}) to determine if the intersection principle holds whenever we understand $\Sha_1(L_1,L_2/K)$.  In particular, we have the following corollaries to the main theorem of \cite{PoR} and Theorem 1 of this paper.
\begin{corollary} If $L_1,L_2$ is a pair of finite separable extensions of $K$ with linearly disjoint Galois closures, then
\[\Sha_\cap(L_1,L_2/K) \simeq \Sha(L_1/K) \times \Sha(L_2/K). \]
\end{corollary}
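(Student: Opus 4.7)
The plan is to read the corollary straight off the short exact sequence (\ref{int1}) derived at the start of Example 2, using the main theorem of \cite{PoR} as a black box. Recall that by definition $\Sha_1(L_1,L_2/K) = \mathrm{Im}(f) \subseteq \Sha(L_1,L_2/K)$. If $L_1,L_2$ have linearly disjoint Galois closures, the main theorem of \cite{PoR} asserts $\Sha(L_1,L_2/K) = \{1\}$, and so in particular $\Sha_1(L_1,L_2/K) = \{1\}$.

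Plugging this vanishing into (\ref{int1}) collapses the middle map to the zero map, so the sequence
\[ 1 \rightarrow \Sha_\cap(L_1,L_2/K) \rightarrow \Sha(L_1/K) \times \Sha(L_2/K) \rightarrow 1 \]
identifies $\Sha_\cap(L_1,L_2/K)$ with $\Sha(L_1/K) \times \Sha(L_2/K)$, which is exactly the claim.

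There is no real obstacle here: the corollary is simply the combination of one prior result with one short exact sequence. The only thing worth double-checking is that the inclusion $\Sha_1 \subseteq \Sha$ (used to deduce $\Sha_1 = \{1\}$) is built into the very definition of the first obstruction as $\mathrm{Im}(f)$, so no additional input is needed. Moreover, if one prefers a self-contained argument within this paper, the vanishing of $\Sha_1(L_1,L_2/K)$ under the linear disjointness hypothesis is reproved in Example 1 via Corollary \ref{sha2cor} (surjectivity of $M(\pi_1) \times M(\pi_2)$ using the sections $\iota_i$), so the corollary can be obtained without invoking \cite{PoR} as a black box.
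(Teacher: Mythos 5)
Your proof is correct and is exactly the argument the paper intends: the main theorem of \cite{PoR} gives $\Sha(L_1,L_2/K)=\{1\}$, hence $\Sha_1(L_1,L_2/K)=\mathrm{Im}(f)=\{1\}$, and the short exact sequence (\ref{int1}) then identifies $\Sha_\cap(L_1,L_2/K)$ with $\Sha(L_1/K)\times\Sha(L_2/K)$. One small caveat on your closing remark: Example 1 only reproves the vanishing for pairs of \emph{Galois} extensions that are linearly disjoint, so it does not by itself give a self-contained proof of the corollary in the stated generality of separable extensions with linearly disjoint Galois closures.
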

 \begin{corollary} If $L_1,L_2$ is a pair of finite abelian extensions of $K$, then
 \[ |\Sha_\cap(L_1,L_2/K)| = \frac{|\Sha(L_1/K)||\Sha(L_2/K)|}{|\Sha(L_1\cap L_2/K)|}. \]
\end{corollary}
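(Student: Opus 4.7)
The plan is to combine the exact sequence (\ref{int1}) with Theorem 1. Since all groups in sight are finite, the short exact sequence
\[ 1 \rightarrow \Sha_\cap(L_1,L_2/K) \rightarrow \Sha(L_1/K) \times \Sha(L_2/K) \rightarrow \Sha_1(L_1,L_2/K) \rightarrow 1 \]
immediately gives the order identity
\[ |\Sha_\cap(L_1,L_2/K)| = \frac{|\Sha(L_1/K)|\,|\Sha(L_2/K)|}{|\Sha_1(L_1,L_2/K)|}. \]
So it suffices to identify $\Sha_1(L_1,L_2/K)$ with $\Sha(L_1\cap L_2/K)$ in the abelian case.

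For this, I would first invoke Lemma \ref{Sha2est}: since $L_1$ and $L_2$ are abelian over $K$, the maximal abelian subextensions are $M_1=L_1$, $M_2=L_2$, and $M=L_1L_2=M_1M_2$, so the bound in that lemma forces $\Sha_2(L_1,L_2/K)=\{1\}$. Hence by definition $\Sha_1(L_1,L_2/K)=\Sha(L_1,L_2/K)$. Theorem 1 then gives $\Sha_1(L_1,L_2/K)\simeq \Sha(L_1\cap L_2/K)$, and substituting this into the order identity yields the claim.

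There is no real obstacle here, since everything needed has already been established: (\ref{int1}) supplies the numerical relation between $\Sha_\cap$, $\Sha_1$, and the product of the individual Tate--Shafarevich groups, while Theorem 1 (together with the vanishing of $\Sha_2$ in the abelian case) pins down $\Sha_1$. The proof is essentially a one-line combination of these two inputs.
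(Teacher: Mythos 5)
Your proposal is correct and matches the paper's intended argument: the corollary is stated in Example 2 precisely as a consequence of the short exact sequence (\ref{int1}) together with Theorem 1, and the vanishing of $\Sha_2(L_1,L_2/K)$ via Lemma \ref{Sha2est} that you use to identify $\Sha_1(L_1,L_2/K)$ with $\Sha(L_1,L_2/K)$ is exactly the first step of the paper's proof of Theorem 1. Nothing further is needed.
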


\vskip 1mm
\noindent \newline \textbf{Example 3.} The map $\varphi$ defined in $\S3$ may fail to be injective even if $E=L_1 \cap L_2$ is a cyclic extension of $K$.  Let $K=\bQ$, $L_1= \bQ(i,2^{1/4})$, and $L_2= \bQ(\sqrt{2},\sqrt{3})$.  Then $L=\bQ(i,2^{1/4},\sqrt{3})$ and $E=\bQ(\sqrt{2})$.  Let $G=\mathrm{Gal}(L/K)$, let $H=\mathrm{Gal}(L/E)$, let $\tau \in H$ be the automorphism defined by complex conjugation, and let $\sigma \in G$ be the automorphism which sends $2^{1/4}$ to $i2^{1/4}$ and fixes $i$ and $\sqrt{3}$.  Then $H=H^{ab}$ and $[\sigma, \tau]$ is a non-trivial element of $\mathrm{Ker}(H^{ab} \rightarrow G^{ab})$, so it follows from Lemma \ref{L:3B} that $\varphi$ is not injective.

\bibliographystyle{amsplain}

\end{document}